\documentclass[11pt]{amsart}
\usepackage[T1]{fontenc}
\usepackage{lmodern,microtype}
\usepackage{amsmath,amssymb,amsthm,mathtools}
\usepackage{enumitem}
\usepackage[margin=1.05in]{geometry}
\usepackage[colorlinks=true,linkcolor=blue!50!black,citecolor=blue!50!black,urlcolor=blue!50!black]{hyperref}
\usepackage{xcolor}
\usepackage[capitalize]{cleveref}

\newtheorem{theorem}{Theorem}[section]
\newtheorem{proposition}[theorem]{Proposition}
\newtheorem{lemma}[theorem]{Lemma}
\newtheorem{corollary}[theorem]{Corollary}
\theoremstyle{remark}
\newtheorem{remark}[theorem]{Remark}
\newtheorem{example}[theorem]{Example}

\newcommand{\R}{\mathcal R}
\newcommand{\RK}{\mathcal R_K}
\newcommand{\res}{\operatorname{res}}
\newcommand{\diag}{\operatorname{diag}}
\newcommand{\GL}{\operatorname{GL}}
\newcommand{\PGL}{\operatorname{PGL}}
\newcommand{\Sch}[2]{\{#1\}_{#2}}

\newcommand{\I}{I_2}
\numberwithin{equation}{section}

\title[Annular Frobenius for Schwarzian equations]{Annular Frobenius Classification of\texorpdfstring{\\}{ }$p$-adic Stieltjes--Schwarzian Equations}
\author{Mohammadreza Mohajer}
\address{Department of Mathematics, Physics and Geology, Cape Breton University, 1250 Grand Lake Road, Sydney, Nova Scotia B1P 6L2, Canada}
\email{Mohammadreza\_Mohajer@cbu.ca, mohammadreza.mohajer96@gmail.com}
\author{Abdellah Sebbar}
\address{Department of Mathematics and Statistics, University of Ottawa, Ottawa, Ontario K1N 6N5, Canada}
\email{asebbar@uottawa.ca}
\subjclass[2020]{Primary 12H25; Secondary 34M15, 33C45, 11F03}

\keywords{$p$-adic differential equations, Schwarzian equations,
Robba ring, Frobenius structures, Heine--Stieltjes polynomials,
Jacobi polynomials, modular differential equations}
\date{}

\hypersetup{
  pdftitle={Annular Frobenius Classification of p-adic Stieltjes--Schwarzian Equations},
  pdfauthor={Mohammadreza Mohajer and Abdellah Sebbar},
  pdfsubject={Annular normal forms and Frobenius structures for p-adic Schwarzian differential equations},
  pdfkeywords={p-adic differential equations, Schwarzian equations, Robba ring, Frobenius structures, Heine--Stieltjes polynomials, Jacobi polynomials, modular differential equations}
}

\begin{document}

\begin{abstract}
We give explicit annular normal forms and classify Frobenius
structures for $p$-adic Schwarzian equations defined by developing
differentials. For a unit of the Robba ring, reduction of order
expresses the associated module as a quadratic rank-one twist of a
unipotent extension determined by the residue. Frobenius existence
is characterized by a square-class condition. At odd primes, there
are exactly four differential-module types, and a uniform
dominant-monomial hypothesis gives a Frobenius formula in analytic
families across the zero-residue locus. For rational nonintegral
annular powers, we obtain a diagonal normal form and a congruence
criterion at odd primes; both results hold at every prime when the
unit factor is an explicit square. Applied to the reducible modular
equations $\theta^2y-\ell^2E_4y/144=0$, with $\gcd(\ell,6)=1$,
this gives exactly two annular types at every prime and determines
the least $s\geq1$ for which Frobenius under $q\mapsto q^{p^s}$
exists. The subfamily $\ell=12n+1$ has a single annular type.
Heine--Stieltjes residue cancellation and nondegenerate Jacobi
configurations provide explicit examples. We also prove an
obstruction to constant projective Frobenius equivariance of
Robba-ring developing maps.
\end{abstract}

\maketitle
\section{Introduction}
A developing differential determines a Schwarzian
coefficient and an associated second-order differential equation.
Classical Heine--Stieltjes theory supplies such differentials
through residue cancellation at the roots of a polynomial
\cite{Stieltjes,Scherbak}. In the modular setting, integrals of
meromorphic forms of weight two give developing maps for modular
Schwarzian equations \cite[\S7]{SSintegrals}. These constructions
lead to a classification problem for the associated differential
modules on a $p$-adic annulus. We construct explicit normal forms
and all Frobenius matrices, including a formula valid in relative
analytic families that remains valid when the residue vanishes. For the classical reducible modular family, we establish convergence of the gauges at every prime and determine the annular differential-module types. The least Frobenius iterate then follows from the exponent classes.

Let $K/\mathbb Q_p$ be finite, and let $\RK$ denote the Robba ring
over $K$ in the variable $q$, consisting of Laurent series
converging on some open annulus $\rho<|q|<1$. Put
$\theta=q\,d/dq$. For $d=p^s$, with $s\ge1$, let $\varphi_d$ act
by $q\mapsto q^d$ and fix $K$. Throughout, a Frobenius structure
means a horizontal isomorphism $\varphi_d^*M\to M$.
Section~\ref{sec:prelim} specifies the differential-module and
pullback conventions, following \cite{Kedlaya,KedOverview}.
For $g\in\RK^\times$, define
\begin{equation}\label{eq:Fg}
u=\frac{\theta g}{g},\qquad
F_g=\theta u-\frac12u^2,\qquad
M(g):\quad \theta^2y+\frac12F_gy=0.
\end{equation}
When $\theta h=g$, the coefficient $F_g$ is the Schwarzian
derivative of $h$. The logarithmic-derivative formula defines
$M(g)$ for every unit $g$, including those for which the
developing differential $g\,dq/q$ has nonzero residue.

Our first result expresses $M(g)$ in terms of a rank-one factor
and an annular extension class. Write
$g=r+\theta H$, where $r=\res(g\,dq/q)$ is the constant Laurent
coefficient of $g$ and $H\in\RK$ has zero constant coefficient.
An explicit reduction-of-order gauge in \cref{thm:normal}
identifies $M(g)$ with
\begin{equation}\label{eq:intro-normal}
\theta Z=\left(-\frac{\theta g}{2g}I_2+rN\right)Z,
\qquad
N=\begin{pmatrix}0&1\\0&0\end{pmatrix}.
\end{equation}
The scalar term defines a rank-one module whose tensor square
is trivial, while the unipotent factor has extension class
$r[dq/q]\in H^1_{\mathrm{dR}}(\RK)$.
Thus the residue determines whether the unipotent extension splits.
Building on this normal form, \cref{thm:frobenius} describes every
$\varphi_d$-Frobenius structure and proves that one exists exactly
when
\begin{equation}\label{eq:intro-square}
\frac{\varphi_d(g)}{g}\in K^\times(\RK^\times)^2.
\end{equation}
For odd $p$, this condition holds for every unit $g$.
Moreover, \cref{cor:four} gives exactly four differential-module
isomorphism types, determined by the parity of the annular winding
number and the vanishing or nonvanishing of $r$.

The residue normal form extends to analytic families.
For odd $p$, under a uniform dominant-monomial hypothesis,
\cref{thm:family} constructs a single explicit Frobenius matrix
over the relative Robba ring. The formula remains defined across
the zero-residue locus, where the unipotent extension changes
from nonsplit to split. In particular, this change of
differential-module type is compatible with an analytic
Frobenius structure on the entire family.
\Cref{thm:stieltjes-family} applies the construction to local
analytic branches of nondegenerate Stieltjes configurations
whose annular pullbacks satisfy the uniform hypothesis.
Explicit examples exhibit moving Stieltjes roots and a varying
residue without degeneration of the root configuration.

Developing differentials with fractional powers lead to a
further arithmetic condition. For
$\nu\in\mathbb Q\setminus\mathbb Z$ and $f\in\RK^\times$, the
expression $q^\nu f$ determines a coefficient in $\RK$ through
$u=\nu+\theta f/f$ and $F=\theta u-u^2/2$.
For odd $p$, \cref{thm:kummer} gives a diagonal normal form over
$\RK$ and proves that Frobenius exists precisely when
\begin{equation}\label{eq:intro-kummer}
\frac{(d-1)\nu}{2}\in\mathbb Z
\quad\text{or}\quad
\frac{(d+1)\nu}{2}\in\mathbb Z.
\end{equation}
The two alternatives correspond to Frobenius preserving or
interchanging the two rank-one summands. When $f$ is an explicit
square in $\RK^\times$, \cref{cor:kummer-square} gives the same
construction and criterion at every prime. This case provides the
annular normal forms needed for the modular application.

The principal application concerns the classical
Hurwitz--Klein family
\begin{equation}\label{eq:intro-modular}
\theta^2y-\frac{(12n+1)^2}{144}E_4(q)y=0,
\qquad n\ge0,
\end{equation}
where
\[
E_4(q)=1+240\sum_{k\ge1}\sigma_3(k)q^k,
\qquad
\sigma_3(k)=\sum_{\substack{a\mid k\\a>0}}a^3.
\]
At every prime, \cref{thm:modular-family} identifies all these
modules over $\mathcal R_{\mathbb Q_p}$ with the diagonal
system having exponents $-1/12$ and $1/12$. The identification
is given by an explicit gauge whose convergence is established
on a Robba annulus. Consequently, Frobenius exists exactly when
$d\equiv\pm1\pmod{12}$. For $p\ge5$, the least integer $s\ge1$
for which $\varphi_{p^s}$-Frobenius exists is one when
$p\equiv1,11\pmod{12}$ and two when
$p\equiv5,7\pmod{12}$. No positive $s$ exists at $p=2,3$.

\Cref{cor:all-modular} extends the classification to
\[
\theta^2y-\frac{\ell^2}{144}E_4(q)y=0,
\qquad \ell\ge1,\quad \gcd(\ell,6)=1.
\]
This full reducible modular family has exactly two annular
differential-module types, corresponding to
$\ell\equiv\pm1$ and $\ell\equiv\pm5\pmod{12}$.
Both types have the same Frobenius existence criterion.
For $p\equiv5,7\pmod{12}$, pullback under $q\mapsto q^p$
exchanges the two types, explaining why two iterations are
necessary and sufficient.

The Stieltjes and modular constructions draw on established
algebraic results. The polynomial root equations and their
master-function interpretation belong to Heine--Stieltjes
theory \cite{Stieltjes,Scherbak}, and the Wronskian pencils arise
from the reproduction procedure of Mukhin and Varchenko
\cite{MV}. Korotkin develops the relation between
Stieltjes--Bethe equations and projective structures
\cite{Korotkin}. Saber and Sebbar construct the complex modular
developing differentials underlying \eqref{eq:intro-modular}
\cite{SSintegrals}. The developing differentials for the full
reducible family and their Jacobi description appear in
\cite[Theorems~3.1 and~6.1]{BSS}. Our modular application adds
the annular identification, with convergent gauges over
$\mathcal R_{\mathbb Q_p}$ at every prime, and the resulting
Frobenius classification. The arithmetic Frobenius action in
\cite{BSS} concerns pole fields; here Frobenius is a horizontal
isomorphism under $q\mapsto q^d$.

The analytic ingredients include annular integration,
rank-one Kummer equations, and the classification of unipotent
connections \cite[\S\S7.3,8.2,9.2,17.1]{Kedlaya};
the nilpotent linear-algebra description is given in
\cite[Proposition~4.5.3]{KedOverview}. General results on
$p$-adic differential equations in families appear in
\cite{KedFamilies}. The four-type classification follows from
these standard ingredients once the Schwarzian gauge is explicit;
the family formula remains defined as the extension splits.
The Jacobi Hessian calculation supplies a concrete
nondegeneracy criterion for the deformation construction,
using the spectral interpretation in \cite{Sasaki}.
Finally, these results give the annular classification of the
reducible modular family. A developing map is obtained as the ratio
of two horizontal solutions, but \cref{thm:rigidity} shows that a Robba-ring developing map with unit derivative cannot satisfy a constant projective Frobenius functional equation, even when its differential module admits Frobenius.

Sections~\ref{sec:annular} and \ref{sec:kummer} establish the
normal forms and Frobenius classifications for unit and Kummer
developing differentials, respectively.
Section~\ref{sec:stieltjes} recalls Stieltjes residue cancellation,
its projective interpretation, and the Wronskian degeneracy
mechanism, with a brief discussion of global restrictions.
Section~\ref{sec:families} develops the analytic family
construction and its Stieltjes and Jacobi examples.
Section~\ref{sec:modular} proves the modular applications.

\section{Schwarzian equations and annular preliminaries}\label{sec:prelim}

\subsection{Developing maps and changes of coordinates}

Let $(E,D)$ be a differential field of characteristic zero, with constant field $C=\ker D$. For $h\in E$ with $Dh\ne0$, define the Schwarzian derivative \cite[\S2]{SSintegrals} by
\begin{equation}\label{eq:schwarzian}
 \Sch{h}{D}=D\left(\frac{D^2h}{Dh}\right)
 -\frac12\left(\frac{D^2h}{Dh}\right)^2
 =\frac{D^3h}{Dh}-\frac32\left(\frac{D^2h}{Dh}\right)^2.
\end{equation}
A solution of $\Sch{h}{D}=F$ is called a \emph{developing map} for the coefficient $F$. In an analytic coordinate, local univalence means that the derivative does not vanish. In the annular statements below we require the stronger ring-theoretic condition $\theta h\in\RK^\times$.

The next proposition records the classical Schwarzian correspondence and chain rule; see \cite[\S2]{SSintegrals} and \cite[\S1]{Korotkin}. We give the algebraic proof to specify the constants and square roots involved.

\begin{proposition}[Classical Schwarzian identities]\label[proposition]{prop:formalism}
Suppose $y_1,y_2$ solve $D^2y+Fy/2=0$ in a differential field extension and have nonzero Wronskian. Then $h=y_2/y_1$ satisfies $\Sch{h}{D}=F$. Conversely, if $Dh=g\ne0$, adjoining $g^{1/2}$ gives the fundamental pair
\[
 y_1=g^{-1/2},\qquad y_2=hg^{-1/2}
\]
for the equation with coefficient $\Sch{h}{D}$.
For $x\in E$ with $Dx\ne0$ and $D_x=(Dx)^{-1}D$, one has
\begin{equation}\label{eq:chain}
 \Sch{h}{D}=(Dx)^2\Sch{h}{D_x}+\Sch{x}{D}.
\end{equation}
In particular, postcomposition by an element of $\PGL_2(C)$ preserves the Schwarzian, and replacing $D$ by $cD$, $c\in C^\times$, multiplies it by $c^2$.
\end{proposition}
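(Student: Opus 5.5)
The plan is to verify each identity by direct computation in the differential field, organizing everything around the logarithmic derivative $u=D^2h/Dh=Dg/g$. Write $\Sch{h}{D}=Du-\frac12u^2$, which is the first form in \eqref{eq:schwarzian}.

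\emph{Converse direction.} Adjoin $w=g^{-1/2}$; since $w^2=g^{-1}$, the derivation extends uniquely with $Dw=-\frac12 uw$. Then
\[
D^2w=\Bigl(-\tfrac12Du+\tfrac14u^2\Bigr)w=-\tfrac12\Bigl(Du-\tfrac12u^2\Bigr)w,
\]
so $y_1=w$ solves $D^2y+\frac12\Sch{h}{D}y=0$. For $y_2=hw$ we compute
\[
D^2(hw)=(D^2h)w+2(Dh)(Dw)+hD^2w=guw-guw+hD^2w=hD^2w,
\]
so $y_2$ is also a solution. The Wronskian is $y_1Dy_2-y_2Dy_1=w^2Dh=1\ne0$.

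\emph{Forward direction.} Let $W=y_1Dy_2-y_2Dy_1$. Because the equation has no first-order term, $DW=0$, so $W\in C^\times$ (constants of the extension). Then $Dh=W/y_1^2$, hence $u=-2Dy_1/y_1$ and
\[
Du=-2\frac{D^2y_1}{y_1}+2\Bigl(\frac{Dy_1}{y_1}\Bigr)^2=F+\tfrac12u^2,
\]
which gives $\Sch{h}{D}=Du-\frac12u^2=F$. One point needs care: the constants of the extension might be larger than $C$. This does not matter, since we only use $DW=0$.

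\emph{Chain rule.} Put $a=Dx$, so that $Dh=a\,D_xh$. Taking logarithmic derivatives gives $u_D(h)=u_D(x)+a\,u_{D_x}(h)$, where $u_{D_x}(h)=D_x^2h/D_xh$. Writing $v=u_{D_x}(h)$ and $b=u_D(x)=Da/a$, we have $D(av)=a\,Dv+abv=a^2D_xv+abv$. Therefore
\[
\Sch{h}{D}=Db+a^2D_xv+abv-\tfrac12\bigl(b^2+2abv+a^2v^2\bigr)=\Sch{x}{D}+a^2\Sch{h}{D_x},
\]
since the cross terms $abv$ cancel. This is \eqref{eq:chain}.

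\emph{Special cases.} For a Möbius map $\gamma(h)=(\alpha h+\beta)/(\gamma h+\delta)$ we have $D\gamma(h)=(\alpha\delta-\beta\gamma)Dh/(\gamma h+\delta)^2$, so $u$ changes by $-2\gamma Dh/(\gamma h+\delta)$. A short computation shows that $Du-\frac12u^2$ is unchanged; the terms quadratic in $Dh$ cancel. Alternatively, this is clear from the forward direction, because $\gamma$ acts by a linear change of the basis $(y_1,y_2)$ of solutions. Scaling $D$ to $cD$ multiplies $u$ by $c$ and $Du$ by $c^2$, so the Schwarzian is multiplied by $c^2$.

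The only mildly delicate point is extending the derivation to $g^{1/2}$ and keeping track of constants in the extension. The rest is routine algebra, and the main thing to get right is the bookkeeping of the cross terms in the chain rule.
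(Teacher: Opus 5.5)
Your proposal is correct and follows essentially the same route as the paper. The shared steps are: constancy of the Wronskian with $Dh=W/y_1^2$ for the forward direction; the logarithmic derivative $Dy_1/y_1=-Dg/(2g)$ and the Wronskian $y_1^2Dh=1$ for the converse; and, for the chain rule, the splitting $D^2h/Dh=D(Dx)/Dx+(Dx)\,v$ in which the cross terms cancel.

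The only minor difference is in $\PGL_2(C)$-invariance. The paper deduces it from the chain rule together with $\Sch{T}{d/dx}=0$ for fractional linear $T$. You instead check it directly, or via a change of solution basis, and that is equally valid.
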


\begin{proof}
The Wronskian $W=y_1Dy_2-y_2Dy_1$ satisfies $DW=0$. For $h=y_2/y_1$ we have $Dh=W/y_1^2$, whence
\[
 \Sch{h}{D}=-2D\left(\frac{Dy_1}{y_1}\right)
             -2\left(\frac{Dy_1}{y_1}\right)^2=F.
\]
In the converse direction, $Dy_1/y_1=-Dg/(2g)$ gives the equation for $y_1$. The equation for $hy_1$ follows by differentiation, and the Wronskian is $y_1^2Dh=1$. Linear independence is over the constants of the extension in which these solutions lie.

To prove \eqref{eq:chain}, put $b=Dx$, $s=Db/b$, and $v=D_x^2h/D_xh$. Then $D^2h/Dh=s+bv$. Upon substituting in \eqref{eq:schwarzian}, the mixed terms cancel because $Db=sb$. What remains is $Ds-s^2/2+b^2(D_xv-v^2/2)$. For a fractional linear map $T$, direct differentiation gives $\Sch{T}{d/dx}=0$, so the chain rule proves projective invariance. Constant rescaling follows from the definition.
\end{proof}

A \emph{projective connection} on a curve is described in local coordinates by coefficients related by \eqref{eq:chain}; see \cite[\S1]{Korotkin}. Thus the difference of two projective connections is a quadratic differential. This terminology does not identify a scalar coefficient with an arbitrary rank-two connection: a scalar equation includes a chosen cyclic presentation. An unrestricted change of basis of the connection may change that presentation.

The logarithmic-derivative reduction of the scalar equation \cite[\S2]{SSintegrals} gives the Riccati identity
\begin{equation}\label{eq:riccati}
 a=-\frac{Dg}{2g},\qquad Da+a^2=-\frac12F_g.
\end{equation}
It fixes the sign in the factorization $D^2+F_g/2=(D+a)(D-a)$, where operators act on the right-hand argument and $Da$ in \eqref{eq:riccati} denotes differentiation of the coefficient.

\subsection{The Robba ring, integration, and winding}

From now on, $K$ is a finite extension of $\mathbb Q_p$, normalized by $|p|=p^{-1}$, and $q$ is an indeterminate. First define $\theta=q\,d/dq$ on Laurent series. The \emph{Robba ring} $\RK$ consists of the series $f=\sum_{n\in\mathbb Z}a_nq^n$ for which, for some $0<\rho<1$,
\[
 |a_n|t^n\longrightarrow0\quad(n\longrightarrow\pm\infty)
 \qquad\text{for every }\rho<t<1.
\]
It is a differential integral domain, with constants $\ker\theta=K$. On such an annulus the multiplicative Gauss norm is $|f|_t=\max_n|a_n|t^n$. The ring is the union over inner radii, with outer radius always $1$; it is not the ring of functions on one closed annulus. These conventions agree with \cite[Definition~15.1.4 and \S8.4]{Kedlaya}.

The residue of $f\,dq/q$ is, by definition, its coefficient $a_0$. This is the residue used in annular de Rham cohomology \cite[\S4.4]{KedOverview}; it does not involve integration around a topological circle. The following standard annular integration calculation is \cite[Lemma~9.2.1]{Kedlaya}, written for the logarithmic derivation.

\begin{lemma}[Residue and integration]\label[lemma]{lem:integration}
Every $g=\sum b_nq^n\in\RK$ has a unique expression
\[
 g=r+\theta H,\qquad r=b_0,\qquad
 H=\sum_{n\ne0}\frac{b_n}{n}q^n,
\]
with zero constant coefficient in $H$. In particular,
\[
 H^1_{\mathrm{dR}}(\RK):=
 \frac{\RK\,dq/q}{\{\theta f\,dq/q:f\in\RK\}}\simeq K
\]
by residue. If $f\ne0$ and $\theta f=\lambda f$ for $\lambda\in K$, then $\lambda\in\mathbb Z$ and $f=cq^\lambda$ with $c\in K^\times$.
\end{lemma}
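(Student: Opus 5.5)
The plan is to verify the three assertions of \cref{lem:integration} in order, working coefficientwise on a fixed annulus $\rho<|q|<1$ on which $g$ converges.

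\emph{Existence and uniqueness of $g=r+\theta H$.} Since $\theta q^n=nq^n$, any $H=\sum_{n\ne0}c_nq^n$ with zero constant coefficient has $\theta H=\sum_{n\ne0}nc_nq^n$. Comparing coefficients forces $c_n=b_n/n$ for $n\neq0$ and $r=b_0$, which gives uniqueness. For existence, the only issue is that $H$ lies in $\RK$. This is the main (though mild) obstacle, because division by $n$ can enlarge $p$-adic coefficients. I would use the bound $|1/n|=p^{v_p(n)}\le|n|_\infty$, i.e.\ $|1/n|\le |n|$ in the archimedean sense. Given $\rho<t<1$, choose $t'$ with $\rho<t<t'<1$. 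Then
\[
|b_n/n|\,t^n\le |n|\,(t/t')^{n}\cdot |b_n|t'^n \quad (n>0),
\]
and since $|n|(t/t')^n\to0$ while $|b_n|t'^n\to0$, the left side tends to $0$. For $n\to-\infty$ I would argue symmetrically, taking $\rho<t'<t$ so that $(t/t')^{n}=(t'/t)^{|n|}$ decays geometrically. Hence $H$ converges on the same open annulus. Here the openness of the annulus, i.e.\ the union over radii in the Robba ring, is essential.

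\emph{The cohomology.} The map $f\,dq/q\mapsto$ (constant coefficient of $f$) is $K$-linear and surjective, since constants are sent to themselves. Its kernel consists of the $g$ with $b_0=0$, and these are exactly $\theta H$ by the previous step. Conversely, every $\theta f$ has zero constant coefficient, because $\theta$ kills $q^0$. So the residue induces $H^1_{\mathrm{dR}}(\RK)\simeq K$.

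\emph{Eigenfunctions of $\theta$.} Write $f=\sum a_nq^n$. Then $\theta f=\lambda f$ gives $(n-\lambda)a_n=0$ for all $n$. Since $f\ne0$, some $a_n\ne0$, so $\lambda=n\in\mathbb Z$. All other coefficients then vanish, which gives $f=a_\lambda q^\lambda$ with $c=a_\lambda\in K^\times$.
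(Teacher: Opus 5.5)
Your proposal is correct and matches the paper's proof: coefficient comparison for the formal statements, the two-radius estimate ($t'>t$ for $n>0$, $t'<t$ for $n<0$) with $|1/n|_p\le|n|_\infty$ absorbed by geometric decay, and the identity $(n-\lambda)a_n=0$ for the eigenfunction claim. The only difference is that you spell out the residue isomorphism $H^1_{\mathrm{dR}}(\RK)\simeq K$ (surjectivity, and kernel equal to the image of $\theta$), which the paper leaves implicit.
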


\begin{proof}
The formal statements follow by comparing coefficients. For convergence, fix $\rho<t<1$ and choose $\rho<t_-<t<t_+<1$. For positive $n$, the terms of $H$ are bounded using convergence of $g$ at $t_+$ and the factor $|1/n|_p(t/t_+)^n$; for negative $n$, use $t_-$ and $|1/n|_p(t_-/t)^{|n|}$. Since $|1/n|_p\le |n|_{\infty}$, exponential decay dominates this polynomial factor. Thus $H$ converges at every radius in the same open annulus. The last assertion is the coefficient identity $(n-\lambda)a_n=0$.
\end{proof}

We will use the annular unit criterion of \cite[Lemma~8.2.6]{Kedlaya}. The integer in the next lemma is called the \emph{winding number} here; the residue formula makes its meaning intrinsic to the chosen oriented annular parameter.

\begin{lemma}[Units and winding number]\label[lemma]{lem:units}
Every $g\in\RK^\times$ has, on a sufficiently small annular germ, a factorization
\begin{equation}\label{eq:dominant}
 g=cq^m(1+e),\qquad c\in K^\times,\quad m\in\mathbb Z,
 \quad |e|_t<1\quad(\rho<t<1).
\end{equation}
The integer $m$ is unique and equals
\begin{equation}\label{eq:winding}
 m(g)=\res\left(\frac{dg}{g}\right)
      =\res\left(\frac{\theta g}{g}\frac{dq}{q}\right).
\end{equation}
It is additive under products and satisfies $m(g(q^e))=e\,m(g)$ for every positive integer $e$. For odd $p$, one can write $g=cq^m v^2$ with $v\in\RK^\times$.
\end{lemma}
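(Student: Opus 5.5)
We will prove \cref{lem:units} in four steps: take the factorization \eqref{eq:dominant} from \cite[Lemma~8.2.6]{Kedlaya}, identify $m$ with the residue of the logarithmic derivative, derive the two formal properties from that residue formula, and finally extract square roots for odd $p$. The main analytic fact we need is that a unit of $\RK$ has, on some annulus $\rho'<|q|<1$, a single dominant monomial $c q^m$ with $|g-cq^m|_t<|c|t^m$ for all $\rho'<t<1$. This is the cited annular unit criterion. It gives \eqref{eq:dominant} after we shrink the annular germ, with $e=(g-cq^m)/(cq^m)$ and $|e|_t<1$.

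\textbf{Residue formula and uniqueness.} We compute
\[
\frac{\theta g}{g}=m+\frac{\theta e}{1+e}.
\]
We claim that $\res(\theta e/(1+e)\,dq/q)=0$. Since $|e|_t<1$ on the whole germ, the series $\log(1+e)=\sum_{k\ge1}(-1)^{k+1}e^k/k$ converges in $\RK$. The argument is the usual one: on each closed subannulus $|e|_t\le\delta<1$ uniformly, and $|e^k/k|_t\le k\,\delta^k\to0$ because $|1/k|_p\le k$. Granting this, $\theta\log(1+e)=\theta e/(1+e)$ is an exact derivative, and by \cref{lem:integration} its residue is zero. Hence $\res(\theta g/g\,dq/q)=m$, which proves \eqref{eq:winding} and also shows that $m$ is independent of the chosen factorization.

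\textbf{Additivity and pullback.} Additivity follows from $\theta(gh)/(gh)=\theta g/g+\theta h/h$. Alternatively, multiply the two factorizations: $(1+e)(1+e')=1+e''$ with $|e''|_t<1$ by the multiplicativity of the Gauss norm. For the pullback, note that $g(q^e)=c\,q^{em}(1+e(q^e))$ and that $|e(q^e)|_t=|e|_{t^e}<1$. This is a valid factorization on the pulled-back germ $\rho^{1/e}<t<1$, so uniqueness gives $m(g(q^e))=e\,m(g)$.

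\textbf{Square roots for odd $p$.} For odd $p$ we set $v=\exp(\tfrac12\log(1+e))$, or equivalently $v=(1+e)^{1/2}=\sum_k\binom{1/2}{k}e^k$. Since $p$ is odd, $\binom{1/2}{k}\in\mathbb Z_p$, so $|\binom{1/2}{k}e^k|_t\le|e|_t^k$. On each closed subannulus the series therefore converges, with $|v-1|_t<1$. It follows that $v\in\RK^\times$, since $1+(\text{small})$ is invertible via a geometric series, and $v^2=1+e$ holds as a formal identity of convergent series. This gives $g=cq^mv^2$.

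\textbf{Main obstacle.} The delicate point is convergence on the whole open annular germ rather than on one closed annulus. This is where we need $|e|_t<1$ for every $t$, together with uniformity on compact subintervals of radii. If the cited lemma gives only pointwise bounds, we would first shrink $\rho$ and use the fact that $t\mapsto\log|e|_t$ is convex and piecewise linear. On any closed interval $[t_1,t_2]\subset(\rho,1)$ this yields a uniform bound $\max(|e|_{t_1},|e|_{t_2})<1$, which is sufficient.
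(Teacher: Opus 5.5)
Your proposal is correct and follows the paper's proof essentially step by step. You use Kedlaya's dominant-monomial criterion, made uniform on closed subannuli as in your ``main obstacle'' paragraph. You then use the convergent logarithm to show that $\theta e/(1+e)$ has zero residue, logarithmic differentiation and substitution for additivity and pullback, and the $\mathbb Z_p$-integral binomial series for the square root.

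One small correction: drop the phrase $v=\exp(\tfrac12\log(1+e))$. When $|e|_t$ is close to $1$, the norm $|\log(1+e)|_t$ can exceed $p^{-1/(p-1)}$ (already for $e=q$), so this composition need not converge. The binomial series you actually use is the right definition.
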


\begin{proof}
Choose an annulus on which both $g$ and $g^{-1}$ converge. On every closed subannulus, \cite[Lemma~8.2.6]{Kedlaya} gives a unique dominant Laurent monomial. Its exponent agrees on overlapping subannuli, so it is one integer $m$ throughout the open annulus. Taking $c$ to be the coefficient of that monomial gives \eqref{eq:dominant}.

The logarithm $\log(1+e)=\sum_{k\ge1}(-1)^{k+1}e^k/k$ converges on every closed subannulus: the endpoint norms of $e$ are strictly less than one. Its derivative is $\theta e/(1+e)$. The residue of that derivative is zero by \cref{lem:integration}, proving \eqref{eq:winding}. Additivity follows from logarithmic differentiation, and substitution proves the pullback formula. For odd $p$, the binomial coefficients of $(1+e)^{1/2}$ belong to $\mathbb Z_p$; this series and its inverse converge on the same subannuli. Their compatible sums give $v$.
\end{proof}

The two residues $\res(g\,dq/q)$ and $\res(dg/g)$ play different roles. The former is an arbitrary element of $K$ measuring exactness of the developing differential. The latter is an integer measuring the monomial factor of a unit. Neither residue is assumed to determine the other.

\subsection{Differential modules, gauges, and Frobenius pullback}

A differential module over $(\RK,\theta)$ is a finite free module $M$ with a $K$-linear operator $\nabla_\theta$ satisfying
\[
 \nabla_\theta(fm)=\theta(f)m+f\nabla_\theta(m).
\]
In a basis, write $\nabla_\theta=\theta-B$. Its horizontal sections satisfy $\theta Y=BY$. A \emph{gauge} is an invertible matrix $P$ implementing a basis change $Y=PZ$; the new horizontal matrix is
\begin{equation}\label{eq:gauge}
 A=P^{-1}BP-P^{-1}\theta P,
 \quad\text{equivalently}\quad \theta P=BP-PA.
\end{equation}
These are the standard differential-module conventions of \cite[\S\S5.1--5.3]{Kedlaya}, expressed in terms of horizontal solution matrices.

Fix $d=p^s$ and write $\varphi=\varphi_d$ for $f(q)\mapsto f(q^d)$, with $K$ fixed. It maps series on $\rho<|q|<1$ to series on $\rho^{1/d}<|q|<1$, hence defines an endomorphism of $\RK$, and $\theta\varphi=d\varphi\theta$. In the terminology of \cite[Definition~15.2.1]{Kedlaya}, this is a relative $d$-power Frobenius lift. If the residue field of $K$ has $p^f$ elements, it is an absolute lift exactly when $f$ divides $s$.

The connection pullback \cite[\S5.3 and Remark~17.1.2]{Kedlaya} is the tensor product
\[
 \varphi^*M=\RK\otimes_{\varphi,\RK}M,
 \qquad 1\otimes fm=\varphi(f)\otimes m,
\]
with connection
\[
 \nabla_\theta^*(a\otimes m)=\theta(a)\otimes m
                  +d a\otimes\nabla_\theta m.
\]
The identity $\theta\varphi=d\varphi\theta$ makes this well defined. Its horizontal matrix in the pulled-back basis is $d\varphi(B)$.

A \emph{Frobenius structure} is a horizontal isomorphism $\varphi^*M\to M$; see \cite[Definition~17.1.1 and Remark~17.1.2]{Kedlaya}. Its matrix $\Phi\in\GL_r(\RK)$ satisfies
\begin{equation}\label{eq:horizontal}
 \theta\Phi=B\Phi-\Phi\,d\varphi(B).
\end{equation}
Thus Frobenius is extra structure on a differential module, and not an equality between its two displayed matrices.

For a Schwarzian coefficient $F$, the companion matrix is
\begin{equation}\label{eq:companion}
 B_F=\begin{pmatrix}0&1\\-F/2&0\end{pmatrix},
 \qquad Y=\binom{y}{\theta y}.
\end{equation}
Let $D_d=\diag(1,d)$. Direct multiplication gives
\[
 D_d\,d\varphi(B_F)D_d^{-1}=B_{d^2\varphi(F)}.
\]
Consequently, the normalized matrix $G=\Phi D_d^{-1}$ satisfies
\begin{equation}\label{eq:normalized-gauge}
 \theta G=B_FG-GB_{d^2\varphi(F)}.
\end{equation}
Taking traces shows that $\det\Phi,\det G\in K^\times$, and $\det\Phi=d\det G$. We retain this distinction whenever a determinant is specified.

\section{Unit developing differentials}\label{sec:annular}

\subsection{Reduction of order and the residue class}
A rank-one module is \emph{trivial} if it has an invertible
horizontal section, in the usual differential-module sense
\cite[\S5.1]{Kedlaya}. Tensoring by a rank-one module is called a
\emph{rank-one twist}; horizontal matrices acquire the corresponding
scalar term. A module is \emph{unipotent} if it has a filtration with
trivial rank-one quotients
\cite[Definition~4.5.1]{KedOverview}. The standard classification of
unipotent annular connections uses constant nilpotent matrices
\cite[Proposition~4.5.3]{KedOverview}.

Set $N=\begin{pmatrix}0&1\\0&0\end{pmatrix}$. For $b\in\RK$, the unipotent horizontal system
\[
U_b:\qquad \theta Z=bNZ
\]
is an extension of the trivial rank-one module by itself. We use the
horizontal-matrix convention, under which its extension class is
\[
[b\,dq/q]\in H^1_{\mathrm{dR}}(\RK).
\]
For the Schwarzian construction, the following calculation derives
both the rank-one twist and the unipotent extension directly from the
developing differential.

\begin{theorem}[Residue normal form]\label[theorem]{thm:normal}
Let $g\in\RK^\times$, let $g=r+\theta H$ as above, and put $a=-\theta g/(2g)$. The matrix
\begin{equation}\label{eq:P}
 P_g=\begin{pmatrix}1&H\\a&g+aH\end{pmatrix},
 \qquad \det P_g=g,
\end{equation}
transforms $M(g)$ into
\begin{equation}\label{eq:normal}
 \theta Z=(a\I+rN)Z.
\end{equation}
If $L_g$ denotes the rank-one system $\theta z=az$ and $U_r$ denotes $\theta Z=rNZ$, then
$$
 M(g)\simeq L_g\otimes U_r.
$$
The tensor square of $L_g$ is trivial. The extension $U_r$ splits if and only if $r=0$; for $r\ne0$, it is isomorphic to $U_1$ and is indecomposable.
\end{theorem}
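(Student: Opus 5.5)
The argument is a direct verification of the gauge identity \eqref{eq:gauge}, followed by standard rank-one and unipotent bookkeeping.

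\emph{The determinant.} Expanding gives $\det P_g=(g+aH)-aH=g$. This is a unit, so $P_g$ is a valid gauge.

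\emph{The gauge identity.} With $B=B_{F_g}$ from \eqref{eq:companion} and $A=a\I+rN$, I will check $\theta P_g=BP_g-P_gA$ entry by entry. The needed identities are:
\begin{itemize}[nosep]
\item the Riccati identity \eqref{eq:riccati}, $\theta a+a^2=-\tfrac12F_g$, valid with $D=\theta$;
\item the relations $\theta H=g-r$ and $\theta g=-2ag$.
\end{itemize}

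\emph{First column.} We have $P_gA$ with first column $(a,a^2)^T$ and $BP_g$ with first column $(a,-F_g/2)^T$. Hence the required equation reads $(0,\theta a)^T=(0,-F_g/2-a^2)^T$, which is exactly Riccati.

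\emph{Second column.} The second column of $P_gA$ is $(aH+r,\ ar+a(g+aH))^T$, and that of $BP_g$ is $(g+aH,\ -F_gH/2)^T$.
\begin{itemize}[nosep]
\item Top entry: the requirement is $\theta H=g+aH-aH-r=g-r$, which holds.
\item Bottom entry: the requirement is $\theta(g+aH)=-F_gH/2-ar-ag-a^2H$. The left side is $-2ag+(\theta a)H+a(g-r)$. Substituting $\theta a=-F_g/2-a^2$, both sides equal $-ag-ar-(F_g/2+a^2)H$.
\end{itemize}
This gives \eqref{eq:normal}. The only real risk in this step is sign conventions, which I would fix using \eqref{eq:gauge}.

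\emph{Tensor decomposition.} In the basis $e\otimes f_i$, where $e$ spans $L_g$ and $f_1,f_2$ span $U_r$, the horizontal matrix of $L_g\otimes U_r$ is $a\I+rN$. This identifies the system \eqref{eq:normal} with $L_g\otimes U_r$.

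\emph{Triviality of $L_g^{\otimes2}$.} The square $L_g^{\otimes2}$ is the system $\theta z=2az=-(\theta g/g)z$. It has the invertible horizontal solution $z=g^{-1}$, since $\theta(g^{-1})=-(\theta g/g)g^{-1}$.

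\emph{Splitting of $U_r$.} The claim is that $U_r$ splits iff there is $f\in\RK$ with $\theta f=r$. Horizontal sections of $U_r$ are $Z=(z_1,z_2)$ with $\theta z_2=0$ and $\theta z_1=rz_2$. Thus $z_2=c\in K$ and $\theta z_1=rc$.
\begin{itemize}[nosep]
\item If $r=0$, then $U_r$ is trivial of rank two.
\item If $r\ne0$, then \cref{lem:integration} (zero residue of $\theta z_1$) forces $rc=0$, so $c=0$. The horizontal sections are then only $K\cdot(1,0)^T$, a one-dimensional space.
\end{itemize}
Any splitting would make $U_r$ a direct sum of two rank-one modules, each trivial as a sub or quotient of the trivial pieces. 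More carefully: if $U_r=L\oplus L'$, then $L\simeq U_r/L'$ and $L'$ are rank-one subquotients. I would instead argue via the class: the extension splits iff $[r\,dq/q]=0$ in $H^1_{\mathrm{dR}}(\RK)$, which by \cref{lem:integration} means $r=0$.

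\emph{Indecomposability.} Suppose $r\ne0$ and $U_r=L\oplus L'$. Any rank-one sub of $U_r$ that has a nonzero horizontal section must be the line spanned by $(1,0)^T$. Quotients are trivial, so both summands are trivial. Then the horizontal sections would be two-dimensional, a contradiction. To show that an arbitrary summand $L$ is trivial, I would project $L$ to the quotient. This projection is either an isomorphism onto the trivial quotient or zero, in which case $L$ equals the trivial sub. This rank-one argument is the main point needing care.

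\emph{The isomorphism $U_r\simeq U_1$.} The constant gauge $\diag(r,1)$ conjugates $rN$ to $N$.
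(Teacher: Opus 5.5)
Your determinant, your entry-by-entry check of $\theta P_g=B_{F_g}P_g-P_g(aI_2+rN)$, the triviality of $L_g^{\otimes2}$ via $g^{-1}$, the splitting criterion via $\theta f=r$ and \cref{lem:integration}, and the gauge $\diag(r,1)$ are all correct. These steps follow the paper, which only says ``by multiplication'' for the gauge identity. Your computation that for $r\ne0$ the horizontal sections of $U_r$ are exactly $Ke_1$, with $e_1=(1,0)^T$, is also right. It already rules out a splitting, since a split $U_r$ would be $\mathbf 1\oplus\mathbf 1$.

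\textbf{The gap is in indecomposability.} There you assert that the projection of a rank-one summand $L$ onto $U_r/\RK e_1\simeq\mathbf 1$ is ``either an isomorphism or zero'', a step you flagged yourself. Being rank one only makes a nonzero horizontal map $L\to\mathbf 1$ injective. Surjectivity means that its image generator $f$, a nonzero solution of $\theta f=\lambda f$ in $\RK$, is a unit. This is true: at a zero $q_0\ne0$ of order $k$, $\theta f$ vanishes to order exactly $k-1$ while $\lambda f$ vanishes to order at least $k$, so $f$ has no zeros near the boundary, and the dominant-monomial criterion behind \cref{lem:units} makes it a unit. But this is an analytic input the proposal does not supply. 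Your claim ``quotients are trivial, so both summands are trivial'' rests on it.

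You can bypass it with what you already have. Write $H^0$ for horizontal sections. If $U_r=L\oplus L'$, then $Ke_1=H^0(U_r)=H^0(L)\oplus H^0(L')$, so $e_1$ lies in one summand, say $L$. Write $e_1=h\ell$ with $\ell$ a generator of $L$, and expand $\ell$ in the basis; this forces $h\in\RK^\times$, so $L=\RK e_1$. Then $L'\simeq U_r/\RK e_1$ is trivial and supplies a second independent horizontal section, a contradiction.

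The paper argues differently: it computes the whole endomorphism ring. A horizontal $T$ satisfies $\theta T=r(NT-TN)$, and nilpotence of this operator forces $T$ to be constant. Hence the endomorphisms are $bI_2+cN$, whose only idempotents are $0$ and $I_2$. That route never analyzes rank-one submodules, and it is the same Laurent-coefficient argument the paper reuses in \cref{thm:frobenius} and \cref{cor:four}.
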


\begin{proof}
The identities $\theta g=-2ag$ and $-F_g/2=\theta a+a^2$ give, by multiplication,
$$
 \theta P_g=B_{F_g}P_g-P_g(a\I+rN).
$$
Since $g$ is a unit, $P_g$ is an invertible gauge. The system for $L_g^{\otimes2}$ has the invertible solution $g^{-1}$, so this tensor square is trivial.

The splitting calculation is the rank-two case of \cite[Proposition~4.5.3]{KedOverview}. The standard subobject and quotient of $U_r$ are trivial rank-one systems. A splitting is equivalent to solving $\theta f=r$, which by \cref{lem:integration} is possible exactly when $r=0$. If $r\ne0$, a constant diagonal gauge identifies $U_r$ with $U_1$. To check indecomposability directly, a horizontal endomorphism $T$ of $U_r$ satisfies
$$
 \theta T=r(NT-TN).
$$
The operator $T\mapsto r(NT-TN)$ on $\operatorname{Mat}_2(K)$ is nilpotent. Laurent coefficient comparison therefore makes $T$ constant. Such a constant endomorphism is of the form $b\I+cN$. The only idempotents of this form are $0$ and $\I$, proving indecomposability.
\end{proof}

The normal form realizes the standard description of extensions by de Rham cohomology \cite[\S4.4]{KedOverview}: its class is $r[dq/q]$. After adjoining $g^{1/2}$, a formal fundamental pair is
$$
 g^{-1/2},\qquad g^{-1/2}(H+r\log q).
$$
Here $\log q$ denotes an adjoined symbol with $\theta\log q=1$. It belongs to no Robba ring in $q$. In particular, the vanishing residue condition is needed for a Robba-ring developing map, but is unnecessary for defining $M(g)$.

\subsection{Frobenius and differential isomorphism classes}

The extension \eqref{eq:normal} carries the class $r[dq/q]$ in $H^1_{\mathrm{dR}}(\RK)$. Frobenius pullback multiplies $[dq/q]$ by $d$. The upper triangular matrix in the next theorem records precisely this scaling. The remaining obstruction is whether the rank-one twist is isomorphic to its Frobenius pullback.

\begin{theorem}[All Frobenius structures]\label[theorem]{thm:frobenius}
For every prime $p$ and every $g\in\RK^\times$, the module $M(g)$ admits a relative Frobenius structure if and only if there exist $\zeta\in\RK^\times$ and $c_0\in K^\times$ with
\begin{equation}\label{eq:square}
 \zeta^2=c_0\,\frac{\varphi(g)}{g}.
\end{equation}
Fix one such $\zeta$, and use $r,H,P_g$ from Theorem~\ref{thm:normal}. Every Frobenius structure, and only such a structure, has the form
\begin{equation}\label{eq:all-frob}
 \Phi=P_g\,\zeta T\,\varphi(P_g)^{-1},
\end{equation}
where
\begin{equation}\label{eq:constant-T}
 \begin{cases}
 T\in\GL_2(K),&r=0,\\[1mm]
 T=\begin{pmatrix}b&c\\0&db\end{pmatrix},\quad
 b\in K^\times,\ c\in K,&r\ne0.
 \end{cases}
\end{equation}
In these formulas $\det\Phi=c_0\det T$.
\end{theorem}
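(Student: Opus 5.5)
The plan is to transport the Frobenius equation \eqref{eq:horizontal} through the gauge $P_g$ of \cref{thm:normal}, on both the source and the pullback side. The normal form $A=a\I+rN$ with $a=-\theta g/(2g)$ pulls back to the horizontal matrix $d\varphi(A)=d\varphi(a)\I+drN$ in the basis $\varphi(P_g)$. Here $\varphi(P_g)$ is a gauge for $\varphi^*M(g)$ because pullback commutes with gauge transformations; the factor $d$ enters through $\theta\varphi=d\varphi\theta$.

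Writing $\Phi=P_g\Psi\varphi(P_g)^{-1}$, the condition \eqref{eq:horizontal} becomes
\[
\theta\Psi=A\Psi-\Psi\,d\varphi(A)=\bigl(a-d\varphi(a)\bigr)\Psi+r(N\Psi-d\Psi N).
\]
Now
\[
a-d\varphi(a)=-\tfrac12\theta\log g+\tfrac12\theta\varphi(\log g)=\tfrac12\,\theta\bigl(\varphi(g)/g\bigr)\big/\bigl(\varphi(g)/g\bigr),
\]
using $d\varphi(\theta g/g)=\theta(\varphi(g))/\varphi(g)$. So the scalar part is the logarithmic derivative of a formal square root of $\varphi(g)/g$.

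For the ``if'' direction, given $\zeta$ with $\zeta^2=c_0\varphi(g)/g$, we have $\theta\zeta/\zeta=a-d\varphi(a)$. Writing $\Psi=\zeta T$ therefore reduces the equation to
\[
\theta T=r(NT-dTN).
\]
Conversely, any solution $\Psi$ has this form for some $T$, because $\zeta$ is a unit.

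The main step is to solve $\theta T=r(NT-dTN)$ with $T\in\operatorname{Mat}_2(\RK)$. The operator $T\mapsto NT-dTN$ is nilpotent on matrices: it raises a "degree" grading, since left multiplication by $N$ and right multiplication by $N$ are commuting nilpotents. Comparing Laurent coefficients as in the proof of \cref{thm:normal}, the $q^n$ coefficient $T_n$ satisfies $(n-r\mathcal L)T_n=0$ with $\mathcal L$ nilpotent. Hence $T_n=0$ for $n\ne0$ and $T=T_0$ is constant with $r(NT_0-dT_0N)=0$.

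Write $T_0=\begin{pmatrix}\alpha&\beta\\ \gamma&\delta\end{pmatrix}$. Then
\[
NT_0-dT_0N=\begin{pmatrix}\gamma&\delta-d\alpha\\0&-d\gamma\end{pmatrix}.
\]
For $r\ne0$ this forces $\gamma=0$ and $\delta=d\alpha$, which is the shape in \eqref{eq:constant-T}. For $r=0$ there is no condition, and invertibility of $\Phi$ gives $T\in\GL_2(K)$. The determinant is $\det\Phi=g\,\zeta^2\det T/\varphi(g)=c_0\det T$.

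For the "only if" direction, suppose a Frobenius structure $\Phi$ exists and set $\Psi=P_g^{-1}\Phi\varphi(P_g)$. Taking determinants in the transported equation gives $\theta\det\Psi=2(a-d\varphi(a))\det\Psi+r\operatorname{tr}(N\Psi-d\Psi N)\det\Psi$-type terms. The cleaner route is to use $\det\Phi\in K^\times$, already noted after \eqref{eq:normalized-gauge}. Since $\det P_g=g$, this yields $\det\Psi=\det\Phi\cdot\varphi(g)/g$.

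The expected obstacle is extracting a square root, i.e.\ producing $\zeta$ itself rather than only $\zeta^2$. The plan is as follows. Look at the lower-left entry $\psi_{21}$ of $\Psi$: the $(2,1)$ component of the transported equation reads $\theta\psi_{21}=(a-d\varphi(a))\psi_{21}$, since the $N$ terms do not touch that entry. Similarly, $\psi_{11}$ satisfies $\theta\psi_{11}=(a-d\varphi(a))\psi_{11}+r\psi_{21}$, and $\psi_{22}$ satisfies $\theta\psi_{22}=(a-d\varphi(a))\psi_{22}-dr\psi_{21}$.

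Some entry among $\psi_{21},\psi_{11},\psi_{22}$ must be nonzero, since $\det\Psi\neq0$. If $\psi_{21}\ne0$, put $\zeta=\psi_{21}$. If $\psi_{21}=0$, then $\psi_{11}\psi_{22}=\det\Psi\neq0$, so $\psi_{11}$ is a nonzero solution of the scalar equation $\theta z=(a-d\varphi(a))z$; put $\zeta=\psi_{11}$. In either case $\zeta^2/(\varphi(g)/g)$ has zero logarithmic derivative, hence lies in $K$, since $\ker\theta=K$ in the domain $\RK$.

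It remains to show $\zeta$ is a unit and $c_0\ne0$. Since $\zeta\neq0$ and $\RK$ is a domain, $c_0\neq0$. Then $\zeta^2=c_0\varphi(g)/g$ is a unit, so $\zeta$ is a unit, and \eqref{eq:square} holds.
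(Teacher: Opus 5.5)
Your proposal is correct and follows the paper's proof essentially step for step. Both arguments transport the Frobenius equation to the normal-form basis, take $\zeta$ to be the lower-left entry (or $\psi_{11}$ when that entry vanishes), and reduce to the nilpotent equation $\theta T=r(NT-dTN)$, whose Laurent-coefficient analysis forces $T$ to be constant of the stated shape. The abandoned determinant aside is unnecessary but harmless.
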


\begin{proof}
In the normal-form basis, a Frobenius matrix $C$ must satisfy
\begin{equation}\label{eq:C}
 \theta C=(a-d\varphi(a))C+rNC-drCN.
\end{equation}
Put $\delta=a-d\varphi(a)$. The lower-left entry satisfies $\theta C_{21}=\delta C_{21}$. If this entry is nonzero, set $\zeta=C_{21}$. Otherwise $C$ is upper triangular and invertibility gives $C_{11}\ne0$; its equation is $\theta C_{11}=\delta C_{11}$, and we set $\zeta=C_{11}$. In either case,
$$
 \theta\left(\zeta^2\frac{g}{\varphi(g)}\right)=0.
$$
The expression is a nonzero element of $K$, so $\zeta^2=c_0\varphi(g)/g$. Its square is a unit, hence $\zeta$ is a unit. This proves necessity without any assumption about zeros of individual entries.

Conversely, \eqref{eq:square} gives $\theta \zeta=\delta \zeta$. On writing $C=\zeta T$, equation~\eqref{eq:C} reduces to
\begin{equation}\label{eq:T}
 \theta T=rNT-drTN.
\end{equation}
The operator $\mathcal L(T)=rNT-drTN$ is nilpotent: its cube is zero because left and right multiplication by $N$ commute and have square zero. If $T=\sum T_nq^n$, then $(n\,\mathrm{id}-\mathcal L)T_n=0$. For $n\ne0$ the operator is invertible, so $T$ is constant. The remaining equation is $rNT=drTN$. For $r=0$ it imposes no condition. For $r\ne0$ it gives $T_{21}=0$ and $T_{22}=dT_{11}$, which is exactly \eqref{eq:constant-T}. Transforming back gives \eqref{eq:all-frob}. Finally,
$$
 \det\Phi=\frac{g\,\zeta^2}{\varphi(g)}\det T=c_0\det T.
$$
\end{proof}

\begin{corollary}[Odd-prime classification]\label[corollary]{cor:four}
Suppose $p\ne2$. Every $M(g)$ admits Frobenius. With $g=cq^m v^2$, the gauge
$$
 \widehat P_g=v^{-1}P_g
$$
transforms its horizontal system into
\begin{equation}\label{eq:constant-normal}
 \theta Z=A_{m,r}Z,\qquad A_{m,r}=-\frac m2\I+rN.
\end{equation}
In this basis all Frobenius matrices are
\begin{equation}\label{eq:constant-normal-frob}
 q^{(d-1)m/2}T,
\end{equation}
with $T$ as in \eqref{eq:constant-T}. Two modules $M(g_1)$ and $M(g_2)$ are isomorphic as differential modules exactly when
$$
 m(g_1)\equiv m(g_2)\pmod2,
 \qquad
 \bigl(\res(g_1\,dq/q)=0\bigr)\Longleftrightarrow
 \bigl(\res(g_2\,dq/q)=0\bigr).
$$
All four possibilities occur.
\end{corollary}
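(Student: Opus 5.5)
The corollary follows from \cref{lem:units} and \cref{thm:frobenius}. I would carry out four steps: existence of Frobenius, the constant normal form, the explicit Frobenius matrices, and the isomorphism classification.

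\emph{Existence.} For $p$ odd, \cref{lem:units} gives $g=cq^mv^2$ with $v\in\RK^\times$. Then
\[
\frac{\varphi(g)}{g}=q^{(d-1)m}\Bigl(\frac{\varphi(v)}{v}\Bigr)^2 .
\]
Since $d=p^s$ is odd, $(d-1)m$ is even. So \eqref{eq:square} holds with $\zeta=q^{(d-1)m/2}\varphi(v)/v$ and $c_0=1$, and \cref{thm:frobenius} gives existence.

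\emph{Constant normal form.} From $g=cq^mv^2$ we get
\[
a=-\frac{\theta g}{2g}=-\frac m2-\frac{\theta v}{v}.
\]
The scalar gauge $v^{-1}\I$ changes the horizontal matrix $a\I+rN$ of \eqref{eq:normal} by subtracting $\theta(v^{-1})/v^{-1}=-\theta v/v$ from the scalar part. Using \eqref{eq:gauge}, I would check that this turns $a\I+rN$ into $A_{m,r}$. Composing with $P_g$ shows that $\widehat P_g=v^{-1}P_g$ (the scalar gauge is $v^{-1}\I$, so $\widehat P_g=P_g\,v^{-1}$) transforms $M(g)$ into \eqref{eq:constant-normal}.

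\emph{Frobenius matrices.} In the new basis, \eqref{eq:all-frob} becomes
\[
\widehat P_g^{-1}\,\Phi\,\varphi(\widehat P_g)=\frac{v}{\varphi(v)}\,\zeta\,T .
\]
With the $\zeta$ chosen above this equals $q^{(d-1)m/2}T$. One can confirm this directly: $\theta C=A_{m,r}C-C\,dA_{m,r}$ has scalar part $-\tfrac m2(1-d)$, which the factor $q^{(d-1)m/2}$ absorbs. This gives \eqref{eq:constant-normal-frob}.

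\emph{Classification, sufficiency.} Once the constant normal form is available, I need only compare the systems $A_{m,r}$. If $m_1\equiv m_2\pmod 2$, the gauge $q^{(m_2-m_1)/2}$, which is an integer power of $q$, identifies the scalar parts. If $r_1,r_2$ are both nonzero, a constant diagonal gauge identifies $r_1N$ with $r_2N$, as in \cref{thm:normal}.

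\emph{Classification, necessity.} This is the main obstacle. For the winding parity, I would take determinants. The determinant of $A_{m,r}$ is the rank-one system $\theta z=-mz$, and an isomorphism forces $\theta z=(m_1-m_2)z$ to have a unit solution. By \cref{lem:integration} any $m_1-m_2$ passes this test, so determinants alone do not detect parity. Instead I would use a horizontal isomorphism $C$ between the two systems. It satisfies
\[
\theta C=\frac{m_2-m_1}{2}C+r_1NC-r_2CN .
\]
Expanding $C$ in Laurent coefficients gives $\bigl(n-\tfrac{m_2-m_1}{2}-\mathcal L\bigr)C_n=0$ with $\mathcal L$ nilpotent, as in the proof of \cref{thm:frobenius}. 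If $m_2-m_1$ is odd, the operator is invertible for every integer $n$, so $C=0$, which is a contradiction. If the parities agree, twisting by the power of $q$ reduces to comparing $U_{r_1}$ and $U_{r_2}$. By \cref{thm:normal}, $U_r$ is split if and only if $r=0$ and is indecomposable otherwise, so the zero/nonzero status of the residue is an isomorphism invariant.

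\emph{Realizing all four types.} The four classes occur with $g=1$ ($m=0$, $r=0$), $g=1+q$ ($m=0$, $r=1$), $g=q$ ($m=1$, $r=0$), and $g=q+q^{0}=q+1$ written on a small annulus near $|q|=1$ with dominant monomial $q$. For the last one I would need to check the dominance carefully. A safer choice is $g=q(1+p\,q^{-1})=q+p$, which has $m=1$ and $r=p\ne0$.
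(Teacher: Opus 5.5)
Everything before the last sentence of the statement is correct and follows the paper's route. You use the gauge $v^{-1}P_g$ with $a=-m/2-\theta v/v$, and a Laurent-coefficient argument with a nilpotent operator to obtain $q^{(d-1)m/2}T$. The same argument then handles gauges between $A_{m_1,r_1}$ and $A_{m_2,r_2}$. Your small variations are both fine. You get existence from the square-class criterion of Theorem~\ref{thm:frobenius} with $\zeta=q^{(d-1)m/2}\varphi(v)/v$ and $c_0=1$. You detect $r=0$ by splitting versus indecomposability of $U_r$, where the paper compares the ranks of $r_1N$ and $r_2N$.

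The gap is in ``all four possibilities occur.'' Here $r=\res(g\,dq/q)$ is the constant Laurent coefficient of $g$, so $g=1$ has $r=1$, not $r=0$. Indeed $M(1)$ is $\theta^2y=0$, whose only solutions in $\RK$ are constants. It is therefore the nonsplit module $U_1$ of Proposition~\ref{prop:euler}. So $g=1$ and $g=1+q$ both lie in the even/nonsplit class, and your list never exhibits the even/split class. Your discarded candidate $q+1$ is also even/nonsplit, since $1$ dominates $q$ on $\rho<|q|<1$.

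The repair is immediate: take $g=q^2$, which has $m=2$ and $r=0$, as the paper does. Together with $g=1$ (even/nonsplit), $g=q$ (odd/split), and $g=q+p$ on $|p|<|q|<1$ (odd/nonsplit), this realizes all four types.
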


\begin{proof}
Since $a=-m/2-\theta v/v$, scalar multiplication of $P_g$ by $v^{-1}$ gives \eqref{eq:constant-normal}. Frobenius changes this constant matrix to $dA_{m,r}$. Coefficient comparison, exactly as in \eqref{eq:T}, yields \eqref{eq:constant-normal-frob}; its exponent is integral for odd $p$.

A gauge $C$ between two constant normal forms satisfies
$$
 \theta C=\frac{m_2-m_1}{2}C+r_1NC-r_2CN.
$$
The last two terms define a nilpotent operator. If $(m_2-m_1)/2\notin\mathbb Z$, every Laurent coefficient of $C$ vanishes. If this number is an integer $k$, every solution is $q^kC_0$ with $C_0$ constant and $r_1NC_0=r_2C_0N$. An invertible $C_0$ exists precisely when the two nilpotent matrices have the same rank. This proves the criterion. Representatives of the four classes are supplied by
$$
 g=q^2,\qquad g=q,\qquad g=1,\qquad g=q+p,
$$
respectively for even/split, odd/split, even/nonsplit, and odd/nonsplit.
\end{proof}

\begin{remark}[The nonsplit Frobenius ratio]\label[remark]{rem:ratio}
When $r\ne0$, the line $\ker N$ in the unipotent normal form is preserved by every differential automorphism and every Frobenius structure, by the matrix calculations above. After the quadratic coordinate extension $q=z^2$, with $\theta=\frac12z\,d/dz$, multiplication of the normal-form basis by $z^{-m}$ removes the scalar term. Frobenius becomes the constant matrix $T$ in \eqref{eq:constant-T}, with eigenvalues $b$ and $db$. Thus the ratio $d$ follows directly from the nonzero extension class. No slope-filtration hypothesis is needed for this conclusion.
\end{remark}

\begin{example}[The prime $2$]\label{ex:two}
For $p=2$, $d=2$, and $g=q$, one has $F_g=-1/2$ and $\varphi(g)/g=q$. Equation~\eqref{eq:square} has no solution: the winding number of a square is even. Hence $\theta^2y-y/4=0$ has no relative Frobenius structure over $\mathcal R_{\mathbb Q_2}$. The square-class criterion, rather than the automatic odd-prime conclusion, applies at $2$.
\end{example}

\begin{corollary}[Frobenius structures up to isomorphism]\label[corollary]{cor:frob-iso}
Fix an odd prime and one constant normal form $A_{m,r}$. Two Frobenius structures on this fixed differential module are called isomorphic if a differential automorphism intertwines their Frobenius maps. If $r=0$, their classes are the ordinary conjugacy classes of $T\in\GL_2(K)$. If $r\ne0$, every class has a unique representative
\[
 q^{(d-1)m/2}\diag(b,db),\qquad b\in K^\times.
\]
\end{corollary}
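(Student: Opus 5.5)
Work throughout in the constant normal form $A=A_{m,r}$ of \cref{cor:four}. There every Frobenius matrix is $\Phi_T=q^{k}T$ with $k=(d-1)m/2\in\mathbb Z$ and $T$ as in \eqref{eq:constant-T}.

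The first step is to determine the differential automorphism group of this module. An automorphism $C$ satisfies $\theta C=rNC-rCN$. This is the case $m_1=m_2$, $r_1=r_2=r$ of the gauge computation in the proof of \cref{cor:four}, so $C=C_0$ is constant and commutes with $rN$. Hence $C_0$ is an arbitrary element of $\GL_2(K)$ when $r=0$. When $r\ne0$, $C_0=\beta\I+\gamma N$ with $\beta\in K^\times$.

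The second step is to write down the intertwining condition. An automorphism $C$ intertwines the two Frobenius maps $\varphi^*M\to M$ exactly when $C\Phi=\Phi'\varphi(C)$. Since $C$ is constant, $\varphi(C)=C$. Since $q^k$ is scalar, the condition reduces to $CT=T'C$, i.e.\ $T'=CTC^{-1}$. For $r=0$ this gives precisely conjugacy in $\GL_2(K)$. One should check that every $T\in\GL_2(K)$ actually arises as a Frobenius structure; this is the $r=0$ case of \cref{thm:frobenius}.

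The third step handles $r\ne0$. Here $T=\begin{pmatrix}b&c\\0&db\end{pmatrix}$, and we conjugate by $C=\I+\gamma N$. A direct computation gives
\[
(\I+\gamma N)T(\I-\gamma N)=\begin{pmatrix}b&c+\gamma(db-b)\\0&db\end{pmatrix}.
\]
Because $d=p^s\ne1$ and $b\ne0$, the choice $\gamma=c/((1-d)b)$ kills the corner entry. Scalars $\beta$ act trivially, so each class contains $\diag(b,db)$. For uniqueness, $b$ is the $(1,1)$ entry and is preserved by conjugation by $\beta\I+\gamma N$. Equivalently, $b$ is the eigenvalue on the invariant line $\ker N$ noted in \cref{rem:ratio}. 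Hence distinct $b$ give distinct classes.

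The only point needing care is that $d-1$ is invertible in $K$. This holds since $d\ge p\ge3$. Otherwise the argument is routine.
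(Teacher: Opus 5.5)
Your proposal is correct and follows the paper's proof essentially step for step. Both arguments show that automorphisms of $A_{m,r}$ are constant, namely arbitrary in $\GL_2(K)$ for $r=0$ and of the form $\beta\I+\gamma N$ for $r\ne0$; both reduce intertwining to ordinary conjugation of $T$ because $\varphi$ fixes constants; and both observe that conjugation shifts $c$ by $(d-1)b\gamma/\beta$ while fixing $b$. One small point: invertibility of $d-1$ only needs $d>1$ in characteristic zero, not $d\ge3$.
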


\begin{proof}
A differential automorphism of $A_{m,r}$ is constant, by the same Laurent coefficient calculation as above. For $r=0$ any constant invertible matrix is allowed. For $r\ne0$ it has the form $xI_2+yN$, $x\ne0$. Since $\varphi$ fixes constants, the action on $T$ is ordinary conjugation. Conjugating $\left(\begin{smallmatrix}b&c\\0&db\end{smallmatrix}\right)$ changes $c$ to $c+(d-1)by/x$, so it can be made zero, while $b$ remains unchanged.
\end{proof}

\subsection{Developing-map rigidity}

A Frobenius matrix is allowed to depend on $q$. A constant fractional linear functional equation for a developing map is a separate condition, governed by the following elementary obstruction.

\begin{proposition}[Developing-map rigidity]\label[proposition]{thm:rigidity}
For any prime $p$ and $d=p^s>1$, there are no $h\in\RK$ with $\theta h\in\RK^\times$ and $C\in\PGL_2(K)$ such that $\varphi_d(h)=C\circ h$. Furthermore, a constant invertible solution of \eqref{eq:normalized-gauge} forces $F=0$.
\end{proposition}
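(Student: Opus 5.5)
The plan is to reduce both assertions to the single functional equation $F=d^2\varphi(F)$ for an element $F\in\RK$, and then show by Laurent coefficients that this forces $F=0$. For the first assertion, $F=0$ will then be ruled out using the unit $g=\theta h$.

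\emph{Step 1: the coefficient lemma.} Suppose $F=\sum f_nq^n\in\RK$ satisfies $F=d^2\varphi(F)$. Comparing coefficients of $q^k$ gives $f_k=0$ when $d\nmid k$, and $f_{dk}=d^2f_k$ otherwise. At $k=0$ this reads $f_0=d^2f_0$, so $f_0=0$ since $d>1$. For $k\neq0$, write $k=d^jk_0$ with $d\nmid k_0$. Iterating gives $f_k=d^{2j}f_{k_0}=0$. Hence $F=0$. This is purely formal and needs no convergence estimates.

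\emph{Step 2: the second assertion.} Let $G=\left(\begin{smallmatrix}x&y\\z&w\end{smallmatrix}\right)$ be constant and invertible, and write $F'=d^2\varphi(F)$. Then \eqref{eq:normalized-gauge} becomes $B_FG=GB_{F'}$. Entrywise this gives
\[
z=-\tfrac12F'y,\qquad w=x,\qquad Fx=F'w,\qquad Fy=F'y.
\]
If $y\neq0$, then $F=F'$ directly. If $y=0$, invertibility gives $x\neq0$, and since $w=x$ the third equation gives $F=F'$. In either case Step 1 yields $F=0$.

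\emph{Step 3: the first assertion.} Suppose $h\in\RK$, $g=\theta h\in\RK^\times$, and $\varphi(h)=C\circ h$. Since $\theta\varphi=d\varphi\theta$, the chain rule \eqref{eq:chain} (or direct substitution into \eqref{eq:schwarzian}) gives $\Sch{\varphi(h)}{\theta}=d^2\varphi(\Sch{h}{\theta})$. Projective invariance in \cref{prop:formalism} gives $\Sch{C\circ h}{\theta}=\Sch{h}{\theta}$. So $F=F_g=\Sch{h}{\theta}$ satisfies $F=d^2\varphi(F)$, and Step 1 gives $F_g=0$.

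\emph{Step 4: excluding $F_g=0$ (the main step).} The task is to show that no $h\in\RK$ with unit derivative has vanishing Schwarzian. I would avoid the differential-field Möbius argument, which would require adjoining $\log q$ and checking constants. Instead I work directly in $\RK$.

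By \eqref{eq:riccati}, $F_g=0$ means $a=-\theta g/(2g)$ satisfies $\theta a=-a^2$. Differentiating $g^{-1}$ twice then shows $\theta^2(g^{-1})=2\theta(a g^{-1})=0$, so $\theta(g^{-1})$ is a constant $\gamma$. If $\gamma\neq0$, then $g^{-1}\in\RK$ has $\theta g^{-1}=\gamma\neq0$, a nonzero constant $\theta$-derivative. This contradicts \cref{lem:integration}, because the residue of a $\theta$-derivative is zero. If $\gamma=0$, then $g$ is a constant in $K^\times$, and $\theta h=g$ has nonzero residue $g$. This again contradicts \cref{lem:integration}, since $h\in\RK$.

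The only delicate point is the identity $\theta^2(g^{-1})=0$. Writing $\theta g^{-1}=2ag^{-1}$, one computes
\[
\theta^2 g^{-1}=2(\theta a)g^{-1}+4a^2g^{-1}=2(\theta a+a^2)g^{-1}+2a^2g^{-1}.
\]
Under $\theta a+a^2=0$ this leaves the residual term $2a^2g^{-1}$, which need not vanish. So the first attempt fails, and I would instead use the correct primitive $g^{-1/2}$. Since a square root may not exist in $\RK$ at $p=2$, I would work with the Riccati equation directly. For $a\neq0$, the equation $\theta a=-a^2$ gives $\theta(a^{-1})=1$ in $\operatorname{Frac}\RK$. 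If $a$ is a unit, this contradicts \cref{lem:integration}, because $\theta(a^{-1})$ would have residue $1$.

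To handle a possibly nonunit $a$, I would use \cref{lem:units} with $g=cq^m(1+e)$. This gives $a=-m/2-\tfrac12\theta\log(1+e)$, which has constant term $-m/2$. The constant coefficient of $\theta a+a^2=0$, evaluated on the dominant-monomial decomposition, should force $m=0$ and then $a=0$. Once $a=0$, $g$ is constant, and the case $\gamma=0$ above finishes the proof. This nonunit case is the step I expect to be the main obstacle.
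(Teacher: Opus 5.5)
Steps 1--3 are correct and match the paper's proof: the coefficient argument for $F=d^2\varphi_d(F)$, the entrywise analysis of a constant $G$, and the chain rule with projective invariance.

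\textbf{The gap is in Step 4.} Your $\gamma$-dichotomy rests on $\theta^2(g^{-1})=0$, which you then show is false. So that part proves nothing. What remains valid covers only two cases: $a=0$, where $g\in K^\times$ cannot equal $\theta h$, and $a\in\RK^\times$, where $\theta(a^{-1})=1$ has nonzero residue. For nonzero nonunit $a$ you give only the expectation that the constant coefficient of $\theta a+a^2=0$ forces $m=0$ and then $a=0$. Write $b=a+m/2$, which has zero constant term. Then that coefficient is $m^2/4+\sum_{n\neq0}b_nb_{-n}$, and nothing you have shown controls the sum. The passage from $m=0$ to $a=0$ is not argued at all. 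So the first assertion is not yet proved.

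\textbf{The missing idea is horizontality.} Using $\theta g=-2ag$ and $\theta a+a^2=0$,
\[
\theta\Bigl(\frac{a^2}{g}\Bigr)=\frac{2a\,\theta a}{g}+\frac{2a^3}{g}=\frac{2a(\theta a+a^2)}{g}=0,
\]
so $a^2=\kappa g$ with $\kappa\in K$. Since $\RK$ is a domain and $g$ is a unit, either $\kappa=0$ and $a=0$, or $a$ is a unit. Your two surviving cases then finish the proof, with no square root and no dominant-monomial analysis.

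This is essentially the paper's mechanism. The element $-a/g$ is the second coordinate of $P_g^{-1}(1,0)^{T}$, the horizontal section of $L_g$ coming from the constant solution $y=1$. The paper argues that, since $r=0$, $M(g)\simeq L_g\oplus L_g$ has either $0$ or $2$ independent horizontal sections, because every nonzero one satisfies $z^2g\in K^\times$. By contrast, $\theta^2y=0$ has exactly the constants as solutions in $\RK$.
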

\begin{proof}
\cref{prop:formalism} and $\theta\varphi_d=d\varphi_d\theta$ give
\[
 F=\Sch{h}{\theta}=\Sch{\varphi_d(h)}{\theta}=d^2\varphi_d(F).
\]
For $F=\sum a_nq^n$, this identity gives $a_n=0$ when $d\nmid n$ and $a_{dn}=d^2a_n$. Every nonzero integer can be divided by $d$ only finitely many times, so all nonconstant coefficients vanish. Since $a_0=d^2a_0$, the constant coefficient vanishes as well.

To exclude $\Sch{h}{\theta}=0$, set $g=\theta h$. Its residue is zero, and \cref{thm:normal} identifies $M(g)$ with $L_g\oplus L_g$. Any nonzero horizontal solution $z$ of $L_g$ satisfies $z^2g\in K^\times$, hence is a unit. The ratio of two such solutions is constant, so $L_g\oplus L_g$ has either zero or two independent horizontal sections. By contrast, the solutions in $\RK$ of $\theta^2y=0$ are exactly the constants: first $\theta y$ is constant, and then its residue makes that constant zero. This gives a contradiction.

Finally, write a constant normalized gauge as $G=\left(\begin{smallmatrix}\alpha&\beta\\\gamma&\delta\end{smallmatrix}\right)$. The equation $B_FG=GB_{d^2\varphi_d(F)}$ gives $\delta=\alpha$ and
\[
 \alpha(F-d^2\varphi_d(F))=\beta(F-d^2\varphi_d(F))=0.
\]
Its first row is nonzero, so the same Laurent argument applies.
\end{proof}

For instance, $h=q$ has $\Sch{h}{\theta}=-1/2$ and produces the odd split class at odd primes. Its module admits Frobenius, although no constant fractional linear transformation sends $q$ to $q^d$. The proposition concerns an exact functional equation in $\RK$; it does not assert geometric rigidity of a connection in the sense of deformation theory.

\section{Kummer developing differentials}\label{sec:kummer}
The unit case has integral annular powers. Modular forms with a character naturally lead to $q^\nu f(q)$, where $\nu$ is rational and $f\in\RK^\times$. If $e\nu\in\mathbb Z$, the expression becomes an analytic function after the coordinate extension $q=z^e$; such an extension is called a \emph{Kummer extension}. The logarithmic derivation extends by $\theta(z)=z/e$. Even before this extension, the functions
\begin{equation}\label{eq:kummer-F}
 u_{\nu,f}=\nu+\frac{\theta f}{f},\qquad
 F_{\nu,f}=\theta u_{\nu,f}-\frac12u_{\nu,f}^2
\end{equation}
belong to $\RK$. We write $M_{\nu,f}$ for their Schwarzian differential module.

If $\nu\in\mathbb Z$, the expression $q^\nu f$ is already a unit of $\RK$, and Section~\ref{sec:annular} applies. The nonintegral case has no residue obstruction, because integration is replaced by a twisted equation with no zero denominator. The following lemma is the rational-exponent variant of annular integration \cite[Lemma~9.2.1]{Kedlaya}; the regular-singular background is \cite[\S7.3]{Kedlaya}.

\begin{lemma}[Twisted integration]\label[lemma]{lem:twisted}
For $\nu\in\mathbb Q\setminus\mathbb Z$, the operator $\theta+\nu$ is a bijection of $\RK$, with inverse
\[
 (\theta+\nu)^{-1}\left(\sum f_nq^n\right)
       =\sum\frac{f_n}{n+\nu}q^n.
\]
This inverse preserves convergence on every open annulus on which the input converges.
\end{lemma}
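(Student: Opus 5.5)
The proof works coefficient by coefficient, in the same way as the proof of \cref{lem:integration}. Since $\nu\notin\mathbb Z$, we have $n+\nu\neq0$ for every $n\in\mathbb Z$. Thus the operator $\theta+\nu$ acts diagonally on Laurent monomials, sending $q^n\mapsto(n+\nu)q^n$, and each such factor is invertible in $K$. At the formal level, the displayed series is therefore the unique two-sided inverse of $\theta+\nu$ on formal Laurent series. This already gives injectivity on $\RK$. Surjectivity, and the fact that the inverse lands in $\RK$, reduce to a convergence statement: if $\sum f_nq^n$ converges on an open annulus $\rho<|q|<1$, then so does $\sum f_n(n+\nu)^{-1}q^n$ on the same annulus.

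For convergence, I would bound $|1/(n+\nu)|_p$ polynomially in $|n|$. Write $\nu=a/b$ with $b\ge1$ and $\gcd(a,b)=1$. Then $n+\nu=(bn+a)/b$, and $bn+a$ is a nonzero integer because $b\nmid a$ when $b>1$. It follows that
\[
 |1/(n+\nu)|_p=|b|_p\,|1/(bn+a)|_p\le |bn+a|_\infty\le b|n|+|a|.
\]
So the new coefficients exceed $|f_n|$ by at most a factor $O(|n|)$.

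Next, fix $t$ with $\rho<t<1$, and choose $\rho<t_-<t<t_+<1$. For $n>0$, we have $|f_n|t^n\,(b|n|+|a|)=|f_n|t_+^n\cdot(t/t_+)^n(bn+|a|)$. The first factor is bounded because the input converges at $t_+$, and the second factor tends to $0$. For $n<0$, use $t_-$ and the factor $(t_-/t)^{|n|}(b|n|+|a|)$ in the same way. Hence $|f_n/(n+\nu)|t^n\to0$ as $n\to\pm\infty$, for every $t$ in the annulus. This shows that the inverse preserves convergence on every open annulus on which the input converges, and in particular maps $\RK$ to $\RK$.

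There is no serious obstacle. The only point needing care is the polynomial bound on $|1/(n+\nu)|_p$: one must note that it uses $b\nmid a$ to exclude zero denominators, and that the constant $|b|_p\le1$ does not spoil the estimate. After that, the exponential-versus-polynomial argument from \cref{lem:integration} applies verbatim.
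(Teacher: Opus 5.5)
Your proposal is correct and follows the paper's proof essentially verbatim. Both arguments use the nonvanishing of $bn+a$, the same polynomial bound on $|(n+\nu)^{-1}|_p$ (the paper keeps the harmless factor $|b|_p$, which you bound by $1$), the two-radius argument from \cref{lem:integration} for convergence, and coefficient comparison for bijectivity.
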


\begin{proof}
Write $\nu=a/b$ with integers $a,b$, $b\ne0$. No $bn+a$ is zero. Since
\[
 |(n+\nu)^{-1}|_p=|b|_p|bn+a|_p^{-1}
 \le |b|_p\bigl(|b|_\infty|n|_\infty+|a|_\infty\bigr),
\]
the denominators have at most polynomial growth. The two-radius argument of \cref{lem:integration} proves convergence. Coefficient comparison proves bijectivity.
\end{proof}

\begin{theorem}[Nonintegral Kummer classification]\label[theorem]{thm:kummer}
Suppose $p$ is odd, $\nu\in\mathbb Q\setminus\mathbb Z$, and $f=cq^m v^2\in\RK^\times$ as in \cref{lem:units}. Put
\[
 H=(\theta+\nu)^{-1}f,\qquad a=-\frac12u_{\nu,f},
 \qquad
 P_{\nu,f}=v^{-1}\begin{pmatrix}1&H\\a&f+aH\end{pmatrix}.
\]
Then $\det P_{\nu,f}=cq^m$, and this gauge transforms $M_{\nu,f}$ into
\begin{equation}\label{eq:kummer-normal}
 \theta Z=\Lambda Z,
 \qquad \Lambda=\diag(\lambda_1,\lambda_2),\qquad
 \lambda_1=-\frac{\nu+m}{2},\quad
 \lambda_2=\frac{\nu-m}{2},
\end{equation}
Equivalently,
\[
M_{\nu,f}\simeq L_{\lambda_1}\oplus L_{\lambda_2},
\]
where $L_\lambda$ denotes the rank-one differential module
$\theta z=\lambda z$. Moreover, for $d=p^s$, a Frobenius structure exists exactly when
\begin{equation}\label{eq:kummer-criterion}
 (d-1)\nu/2\in\mathbb Z\quad\text{or}\quad(d+1)\nu/2\in\mathbb Z.
\end{equation}
All such structures are
\begin{equation}\label{eq:kummer-frob}
 \Phi=P_{\nu,f}C\varphi_d(P_{\nu,f})^{-1},
 \qquad
 C_{ij}=\begin{cases}
 c_{ij}q^{\lambda_i-d\lambda_j},&\lambda_i-d\lambda_j\in\mathbb Z,\\
 0,&\lambda_i-d\lambda_j\notin\mathbb Z,
 \end{cases}
\end{equation}
where $c_{ij}\in K$ and $C$ is invertible. In the first alternative $C$ is diagonal; in the second it is antidiagonal. These alternatives cannot both hold for nonintegral $\nu$.
\end{theorem}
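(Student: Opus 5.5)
The plan is to follow the proof of \cref{thm:normal}, replacing annular integration by the twisted integration of \cref{lem:twisted}. Heuristically, with $w=q^\nu f$ (defined after a Kummer extension), a fundamental pair is $w^{-1/2}$ and $w^{-1/2}\int w\,dq/q$. Since $\int w\,dq/q=q^\nu H$ with $(\theta+\nu)H=f$, every ingredient lies in $\RK$: $H\in\RK$ by \cref{lem:twisted}, and $a=-u_{\nu,f}/2\in\RK$. I will therefore work directly with the unnormalized gauge
\[
P=\begin{pmatrix}1&H\\a&f+aH\end{pmatrix}
\]
and aim for the identity
\[
\theta P=B_{F}P-P\,\diag(a,a+\nu),\qquad F=F_{\nu,f}.
\]

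For the first column, I need $\theta a+a^2=-F/2$. This is the Riccati identity: $\theta a=-\theta u/2$ and $a^2=u^2/4$, so $\theta a+a^2=-\tfrac12(\theta u-\tfrac12u^2)$. For the second column, the upper entry is $\theta H=(f+aH)-(a+\nu)H=f-\nu H$, which is exactly the defining equation of $H$. The lower entry should follow from the Riccati identity together with $\theta f=(u-\nu)f=(-2a-\nu)f$. This is a short computation, but it is the step where sign and convention errors are most likely, so I will check it carefully against the convention \eqref{eq:gauge}.

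Since $\det P=f$ is a unit, $P$ is a valid gauge. Next I write $f=cq^mv^2$, so that
\[
a=-\frac{\nu+m}{2}-\frac{\theta v}{v}.
\]
Multiplying by the scalar $v^{-1}$ subtracts $-\theta v/v$ from the diagonal matrix. This gives the constant matrix $\Lambda$ of \eqref{eq:kummer-normal} and $\det P_{\nu,f}=v^{-2}f=cq^m$. The splitting $M_{\nu,f}\simeq L_{\lambda_1}\oplus L_{\lambda_2}$ is then immediate.

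For Frobenius, the pullback has horizontal matrix $d\varphi(\Lambda)=d\Lambda$. A Frobenius matrix $C$ in the diagonal basis therefore satisfies $\theta C=\Lambda C-dC\Lambda$, which decouples entrywise as
\[
\theta C_{ij}=(\lambda_i-d\lambda_j)C_{ij}.
\]
By the last assertion of \cref{lem:integration}, applied via the coefficient identity $(n-\lambda)c_n=0$ (valid for any $\lambda\in K$), each $C_{ij}$ is $c_{ij}q^{\lambda_i-d\lambda_j}$ when the exponent is an integer and $0$ otherwise. Transporting back by $P_{\nu,f}$ gives \eqref{eq:kummer-frob}.

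The exponents are:
\begin{align*}
\lambda_1-d\lambda_1&=\tfrac{(d-1)(\nu+m)}{2}, & \lambda_2-d\lambda_2&=\tfrac{(d-1)(m-\nu)}{2},\\
\lambda_1-d\lambda_2&=\tfrac{(d-1)m}{2}-\tfrac{(d+1)\nu}{2}, & \lambda_2-d\lambda_1&=\tfrac{(d-1)m}{2}+\tfrac{(d+1)\nu}{2}.
\end{align*}
Because $p$ is odd, $(d-1)m/2\in\mathbb Z$. Hence the diagonal exponents are integral exactly when $(d-1)\nu/2\in\mathbb Z$, and the antidiagonal exponents exactly when $(d+1)\nu/2\in\mathbb Z$. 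A $2\times2$ matrix with this entry pattern is invertible only if both diagonal entries or both antidiagonal entries are nonzero, which yields the criterion \eqref{eq:kummer-criterion} together with the diagonal/antidiagonal dichotomy. If both alternatives held, their difference $(d+1)\nu/2-(d-1)\nu/2=\nu$ would be an integer, contradicting $\nu\notin\mathbb Z$; so the two alternatives are mutually exclusive.

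I expect the main obstacle to be only the bookkeeping in the lower-right entry of the gauge identity. The Frobenius step is a routine Laurent-coefficient comparison.
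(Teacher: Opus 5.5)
Your proposal is correct and follows the paper's proof essentially step for step: the unnormalized gauge identity $\theta P=B_FP-P\,\diag(a,a+\nu)$, the scalar rescaling by $v^{-1}$, and the entrywise Laurent comparison with the same four exponents. The lower-right entry you deferred does check out from $\theta a+a^2=-F/2$ and $\theta f=-(2a+\nu)f$. You leave sufficiency implicit, but it is immediate: in whichever alternative holds, a diagonal or antidiagonal matrix of nonzero monomials lies in $\GL_2(\RK)$.
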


\begin{proof}
Let $P_0$ denote the matrix before multiplication by $v^{-1}$. The equations
\[
 \theta H=f-\nu H,\quad
 \theta f=-(2a+\nu)f,\quad
 \theta a+a^2=-F_{\nu,f}/2
\]
give
\[
 \theta P_0=B_{F_{\nu,f}}P_0-P_0\diag(a,a+\nu).
\]
Since $a=-(\nu+m)/2-\theta v/v$, the scalar factor $v^{-1}$ changes the last diagonal matrix to \eqref{eq:kummer-normal}. The determinant is $f/v^2=cq^m$, so this is an isomorphism over the original Robba ring.

In the diagonal basis, horizontality becomes
\[
 \theta C_{ij}=(\lambda_i-d\lambda_j)C_{ij}.
\]
\cref{lem:integration} gives \eqref{eq:kummer-frob}. An invertible two-by-two matrix requires either its diagonal pair or its antidiagonal pair to be nonzero. The diagonal exponents are $(d-1)(m+\nu)/2$ and $(d-1)(m-\nu)/2$; the antidiagonal exponents are $(d-1)m/2\mp(d+1)\nu/2$. Because $d$ is odd, their integrality is exactly \eqref{eq:kummer-criterion}. If both alternatives held, subtracting them would make $\nu$ integral. Conversely, in either alternative, choosing both available constants nonzero gives an invertible matrix.
\end{proof}

The normal form proves that the two rank-one constituents are distinct: their exponent difference is $\nu\notin\mathbb Z$. Here an exponent means the scalar $\lambda$ in $\theta z=\lambda z$, considered modulo integers because multiplication by $q^k$ shifts it by $k$; this is the shearing convention of \cite[Proposition~7.3.10]{Kedlaya}. Frobenius multiplies exponents by $d$. Criterion~\eqref{eq:kummer-criterion} says precisely that this multiplication preserves the unordered pair of exponent classes.

\begin{corollary}[Least Frobenius iterate]\label[corollary]{cor:least-power}
Under Theorem~\ref{thm:kummer}, let $b$ be the positive denominator of $\nu/2$ in lowest terms. A Frobenius structure for some $q\mapsto q^{p^s}$ exists if and only if $p\nmid b$. When it exists, the least exponent $s$ is
\[
 \min\{s\ge1:p^s\equiv1\text{ or }-1\pmod b\}.
\]
\end{corollary}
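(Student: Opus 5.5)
The argument reduces the criterion \eqref{eq:kummer-criterion} of Theorem~\ref{thm:kummer} to a congruence on $d=p^s$ modulo $b$. Write $\nu/2=a/b$ in lowest terms, with $b>0$. Since $\nu\notin\mathbb Z$, we have $\nu/2\notin\mathbb Z$, so $b\ge2$. Because $\gcd(a,b)=1$, the condition $(d\mp1)\nu/2=(d\mp1)a/b\in\mathbb Z$ holds exactly when $b\mid d\mp1$. Thus, by the theorem, a $\varphi_{p^s}$-Frobenius structure exists if and only if $p^s\equiv1$ or $p^s\equiv-1\pmod b$. The gauge $P_{\nu,f}$ does not depend on $d$, so the theorem applies verbatim for every $s\ge1$. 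The only input used is the criterion itself.

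For the existence statement, first suppose $p\mid b$. Then $p^s\equiv0\pmod p$ for every $s\ge1$, while $\pm1\not\equiv0\pmod p$. Hence $b\nmid p^s\mp1$, and no $s$ works.

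Conversely, suppose $p\nmid b$. Then $p$ is a unit in $(\mathbb Z/b\mathbb Z)^\times$, so it has finite multiplicative order $k\ge1$. This gives $p^k\equiv1\pmod b$, so $s=k$ satisfies the criterion. The set $\{s\ge1:p^s\equiv\pm1\pmod b\}$ is therefore nonempty, and the least such $s$ is the least Frobenius iterate. This is exactly the stated minimum.

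The only point needing care is the translation from $(d\mp1)\nu/2\in\mathbb Z$ to $b\mid d\mp1$. This uses that $\nu/2$ is written in lowest terms, rather than $\nu$: when $\nu$ has odd numerator, the denominators of $\nu$ and $\nu/2$ differ by a factor of $2$. With that fixed, both directions are routine. The case $p=2$ is not covered by Theorem~\ref{thm:kummer} as stated, since it assumes $p$ odd, so the corollary inherits the hypothesis that $p$ is odd. No separate treatment of $p=2$ is needed.
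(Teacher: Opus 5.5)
Your proposal is correct and follows the paper's own argument. You rewrite criterion \eqref{eq:kummer-criterion} as $b\mid p^s\mp1$ using the reduced form of $\nu/2$, which rules out $p\mid b$. When $p\nmid b$, you use the finite order of $p$ in $(\mathbb Z/b\mathbb Z)^\times$ to show the set of admissible $s$ is nonempty, so its minimum is the least iterate.
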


\begin{proof}
For a reduced fraction $\nu/2=a/b$, condition \eqref{eq:kummer-criterion} is $b\mid p^s-1$ or $b\mid p^s+1$. This is impossible if $p\mid b$. Otherwise a positive power of $p$ is $1$ in the finite group $(\mathbb Z/b\mathbb Z)^\times$.
\end{proof}

\begin{corollary}[Square units at every prime]\label[corollary]{cor:kummer-square}
Let $p$ be any prime, $\nu\in\mathbb Q\setminus\mathbb Z$, and
$w\in\RK^\times$. Put $f=w^2$, $H=(\theta+\nu)^{-1}f$, and
$a=-\nu/2-\theta w/w$. The gauge
\[
 P_{\nu,w^2}=w^{-1}
 \begin{pmatrix}1&H\\a&w^2+aH\end{pmatrix}
\]
has determinant one and transforms $M_{\nu,w^2}$ into
\[
 \theta Z=\diag(-\nu/2,\nu/2)Z.
\]
For every $d=p^s$, this module admits Frobenius exactly when
\eqref{eq:kummer-criterion} holds. All Frobenius structures are
given by \eqref{eq:kummer-frob}, with
$\lambda_1=-\nu/2$ and $\lambda_2=\nu/2$.
\end{corollary}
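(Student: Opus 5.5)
The proof will repeat the argument of \cref{thm:kummer}, using the explicit square $f=w^2$ in place of the odd-prime factorization $f=cq^mv^2$ from \cref{lem:units}. That factorization was the only place where oddness of $p$ entered the construction of the gauge. Concretely, we take $c=1$, $m=0$ and $v=w$, which is legitimate at every prime.

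\emph{Gauge.} First, $H=(\theta+\nu)^{-1}(w^2)$ lies in $\RK$ by \cref{lem:twisted}, which holds at any prime. Let $P_0$ be the matrix $\left(\begin{smallmatrix}1&H\\a_0&f+a_0H\end{smallmatrix}\right)$ with $a_0=-\tfrac12u_{\nu,f}=-\nu/2-\theta w/w$; this is the stated $a$. Its determinant is $f=w^2$, so $\det P_{\nu,w^2}=w^{-2}\cdot w^2=1$.

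\emph{Normal form.} The identities $\theta H=f-\nu H$, $\theta f=-(2a+\nu)f$ and $\theta a+a^2=-F_{\nu,f}/2$ are purely algebraic. They give, exactly as before,
\[
\theta P_0=B_{F_{\nu,f}}P_0-P_0\diag(a,a+\nu).
\]
Multiplying by the scalar $w^{-1}$ adds $-\theta(w^{-1})/w^{-1}=\theta w/w$ to each diagonal entry, giving $\diag(-\nu/2,\nu/2)$. Since $P_{\nu,w^2}$ has unit determinant, it is a gauge over $\RK$.

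\emph{Frobenius.} In the diagonal basis, a Frobenius matrix $C$ satisfies $\theta C_{ij}=(\lambda_i-d\lambda_j)C_{ij}$. By \cref{lem:integration}, which holds at every prime, each entry is $c_{ij}q^{\lambda_i-d\lambda_j}$ when the exponent is an integer and $0$ otherwise. This gives \eqref{eq:kummer-frob} after conjugating back by $P_{\nu,w^2}$. The diagonal exponents are $\mp(d-1)\nu/2$ and the antidiagonal exponents are $\mp(d+1)\nu/2$. Invertibility of $C$ requires a nonzero diagonal pair or a nonzero antidiagonal pair, which is exactly \eqref{eq:kummer-criterion}. Conversely, either alternative lets us choose both available constants nonzero, producing an invertible $C$.

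\emph{Main obstacle.} The only delicate point is checking that no step silently uses $p$ odd. In \cref{thm:kummer}, oddness of $d$ was used to translate integrality of exponents such as $(d-1)(m\pm\nu)/2$ into \eqref{eq:kummer-criterion}, because $(d-1)m/2$ is an integer there. With $m=0$ that term disappears, so the exponents are literally $\mp(d\mp1)\nu/2$ and the criterion follows at $p=2$ as well.
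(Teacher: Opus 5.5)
Your proposal is correct and follows the paper's proof. The paper likewise specializes the gauge computation of \cref{thm:kummer} to $v=w$, $m=0$, $c=1$, observes that no binomial square root is used (so $p=2$ is allowed), computes $\det=f/w^2=1$, and repeats the Laurent-coefficient comparison ``now without a winding-number term''; your remark that the parity of $d$ entered only through $(d-1)m/2$ is exactly what that phrase refers to.
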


\begin{proof}
The gauge calculation in \cref{thm:kummer} applies with $v=w$,
$m=0$, and $c=1$. It uses no binomial square root, so it is valid
also at $p=2$. Its determinant is $f/w^2=1$. Laurent coefficient
comparison gives the Frobenius entries and criterion exactly as
in that theorem, now without a winding-number term.
\end{proof}

\subsection{Constant coefficients and the cusp test}
The Euler equations are the basic constant regular-singular systems \cite[\S7.3]{Kedlaya}. The following direct calculation records their Frobenius criterion in our conventions.
\begin{proposition}[Euler equations]\label[proposition]{prop:euler}
Let $\alpha\in K$ and $E_\alpha$ be $\theta^2y-\alpha^2y=0$. If $\alpha\ne0$, it admits $\varphi_d$-Frobenius exactly when
\[
 (d-1)\alpha\in\mathbb Z\quad\text{or}\quad(d+1)\alpha\in\mathbb Z.
\]
For $\alpha=0$, it admits Frobenius and is the nonsplit unipotent module $U_1$.
\end{proposition}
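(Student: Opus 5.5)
The plan is to convert $E_\alpha$ to a constant first-order system and then compute Frobenius matrices entrywise, as in the proof of \cref{thm:kummer}.

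\textbf{Normal form.} In the companion basis \eqref{eq:companion}, $E_\alpha$ has the constant matrix $B=\left(\begin{smallmatrix}0&1\\\alpha^2&0\end{smallmatrix}\right)$. The pullback has matrix $d\varphi(B)=dB$, and a Frobenius matrix $\Phi$ must satisfy $\theta\Phi=B\Phi-d\Phi B$.

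For $\alpha\ne0$ the matrix $B$ has distinct eigenvalues $\pm\alpha$. The constant gauge $P=\left(\begin{smallmatrix}1&1\\\alpha&-\alpha\end{smallmatrix}\right)$ is invertible, so $P\in\GL_2(K)$, and it diagonalizes the system to $\theta Z=\diag(\alpha,-\alpha)Z$. This gives $E_\alpha\simeq L_\alpha\oplus L_{-\alpha}$. No convergence issue arises, since the gauge is constant.

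For $\alpha=0$ the matrix $B$ is already $N$. Hence $E_0$ is literally $U_1$, which is nonsplit by \cref{thm:normal}. Frobenius exists by \cref{thm:frobenius} with $g=1$, $r=1$, $\zeta=1$ and $T=\diag(1,d)$. Concretely, $\Phi=\diag(1,d)$ satisfies $\theta\Phi=N\Phi-d\Phi N$, because $N\diag(1,d)=dN=d\,\diag(1,d)N$.

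\textbf{Frobenius for $\alpha\ne0$.} In the diagonal basis write $\lambda_1=\alpha$ and $\lambda_2=-\alpha$. The horizontality equation separates entrywise into
\[
\theta C_{ij}=(\lambda_i-d\lambda_j)C_{ij}.
\]
By \cref{lem:integration}, a nonzero solution exists exactly when $\lambda_i-d\lambda_j\in\mathbb Z$, and then $C_{ij}=c_{ij}q^{\lambda_i-d\lambda_j}$.

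The four exponents are as follows:
\begin{itemize}[leftmargin=*]
\item the diagonal entries give $(1-d)\alpha$ and $(d-1)\alpha$;
\item the antidiagonal entries give $(1+d)\alpha$ and $-(1+d)\alpha$.
\end{itemize}
An invertible $2\times2$ matrix needs either both diagonal entries or both antidiagonal entries nonzero. This happens exactly when $(d-1)\alpha\in\mathbb Z$ or $(d+1)\alpha\in\mathbb Z$. Conversely, under either condition, choosing the corresponding pair of constants nonzero gives an invertible monomial matrix, so Frobenius exists.

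\textbf{Main obstacle.} The only delicate point is that $\alpha$ lies in $K$, not necessarily in $\mathbb Q$. The integrality conditions must therefore be read inside $K$, and the step from $\theta f=\lambda f$ to $\lambda\in\mathbb Z$ is supplied exactly by \cref{lem:integration}. One might also worry that $\pm\alpha$ fail to be distinct modulo $\mathbb Z$, for example when $2\alpha\in\mathbb Z$. The entrywise argument does not use distinctness of the exponent classes, so that case is covered automatically. No restriction on $p$ is needed, since the diagonalizing gauge is constant and involves no square roots of units.
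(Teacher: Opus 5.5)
Your proposal is correct and follows the paper's proof essentially verbatim. It uses the same constant gauge $\left(\begin{smallmatrix}1&1\\\alpha&-\alpha\end{smallmatrix}\right)$ diagonalizing to $\diag(\alpha,-\alpha)$, the same four monomial exponents with the diagonal/antidiagonal invertibility dichotomy, and $\diag(1,d)$ as Frobenius on $U_1$ when $\alpha=0$; you only make explicit the entrywise equation $\theta C_{ij}=(\lambda_i-d\lambda_j)C_{ij}$ and the appeal to \cref{lem:integration}, which the paper leaves implicit.
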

\begin{proof}
For $\alpha\ne0$, the constant matrix
\[
 S_\alpha=\begin{pmatrix}1&1\\\alpha&-\alpha\end{pmatrix}
\]
diagonalizes the companion matrix to $\diag(\alpha,-\alpha)$. A Frobenius matrix in this basis has entries proportional to monomials with exponents $(1-d)\alpha,(1+d)\alpha,-(1+d)\alpha,(d-1)\alpha$, whenever these exponents are integers. An invertible such matrix exists precisely when both diagonal
exponents or both antidiagonal exponents are integers. For $\alpha=0$, the companion matrix is $N$, and $\diag(1,d)$ gives Frobenius.
\end{proof}

For a coefficient $F\in K[[q]]$ with $F(0)=-2\alpha^2$, the same congruence is a necessary condition if a Frobenius gauge belongs to $\GL_2(K((q)))$. Indeed, regular-singular exponents over $K((q))$ are invariant modulo integral shifts, and Frobenius multiplies the pair $\alpha,-\alpha$ by $d$; see \cite[Propositions~7.3.10 and 7.3.12]{Kedlaya}. This meromorphic cusp test does not by itself apply to arbitrary Robba-ring gauges, which may have infinitely many negative powers. Theorems~\ref{thm:kummer} and \ref{thm:modular-family} establish a gauge on the full annular germ and hence give both necessity and sufficiency there.

\section{Stieltjes differentials and projective connections}\label{sec:stieltjes}
\subsection{The classical construction}
Classical Heine--Stieltjes theory starts with polynomials $R_0,R_1$ and seeks a polynomial $R_2$ for which $R_0Q''+R_1Q'+R_2Q=0$ has a polynomial solution $Q$ of prescribed degree. In the usual Fuchsian setting, $R_0$ has simple roots, $\deg R_1\le\deg R_0-1$, and $\deg R_2\le\deg R_0-2$. The unknown $R_2$ is the Van Vleck polynomial. At a simple root of $Q$, division by $R_0Q'$ gives an equation involving the other roots of $Q$. These are the Stieltjes equations; see \cite{Stieltjes} and \cite[\S1.2, Theorem~A]{Scherbak}.

Writing $a=-R_1/R_0$ and formally choosing $A$ with $A'/A=a$ recasts the equation as $Q''-aQ'+VQ=0$, where $V=R_2/R_0$. Reduction of order associates to it the differential $AQ^{-2}dx$. The absence of logarithms at the roots of $Q$ is exactly residue cancellation for this differential. We first treat rational $A$, for which this construction is global on $\mathbb P^1$. The rational logarithmic-derivative version needed for modular characters is stated immediately afterwards.

In this section $K$ is any characteristic-zero field. Let $A\in K(x)^\times$, and let $Q=\prod_{i=1}^n(x-x_i)$ be monic and squarefree, with every root avoiding the divisor of $A$. Roots may be taken in an algebraic closure. Write
\begin{equation}\label{eq:Stieltjes}
 a_A=\frac{A'}A,\qquad
 S_i=a_A(x_i)-2\sum_{j\ne i}\frac1{x_i-x_j},\qquad
 V=-\frac{Q''-a_AQ'}Q.
\end{equation}
The following elementary formulation of the classical correspondence is compatible with the projective-structure account in \cite[\S2.1]{Korotkin}. We call $Q$ a \emph{Stieltjes polynomial for $A$} when the equivalent conditions below hold. The function $V$ is a rational Van Vleck term; multiplication by $R_0$ gives the classical polynomial \cite[\S1.2, Theorem~A]{Scherbak}.

\begin{proposition}[Residues and projective cancellation]\label[proposition]{prop:stieltjes}
The following conditions are equivalent:
\begin{enumerate}[label=\textup{(\roman*)}]
\item $\omega_Q=AQ^{-2}dx$ has zero residue at every $x_i$;
\item $S_i=0$ for every $i$;
\item the rational function $V$ in \eqref{eq:Stieltjes} is regular at every $x_i$;
\item the coefficient
$$
 \mathcal F_{A,Q}=
 \left(a_A-2\frac{Q'}Q\right)'
 -\frac12\left(a_A-2\frac{Q'}Q\right)^2
$$
is regular at every $x_i$.
\end{enumerate}
In this case
\begin{equation}\label{eq:vv}
 Q''-a_AQ'+VQ=0,\qquad
 \mathcal F_{A,Q}=2V+a_A'-\frac12a_A^2.
\end{equation}
The finite poles of both $V$ and $\mathcal F_{A,Q}$ lie among the zeros and poles of $A$.
\end{proposition}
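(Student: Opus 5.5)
The plan is a local computation at each root $x_i$ using partial fractions. Since $Q$ is squarefree and $x_i$ avoids the divisor of $A$, $A$ is a unit in the local ring at $x_i$. Write $Q=(x-x_i)R_i$ with $R_i(x_i)\ne0$. I would first record the expansions
\[
 \frac{Q'}{Q}=\frac{1}{x-x_i}+\sum_{j\ne i}\frac1{x-x_j},
 \qquad
 \frac{Q''}{Q'}\Big|_{x_i}=2\sum_{j\ne i}\frac1{x_i-x_j}.
\]
The second identity comes from logarithmic differentiation of $Q'=R_i+(x-x_i)R_i'$, which gives $Q''(x_i)=2R_i'(x_i)$ and $Q'(x_i)=R_i(x_i)$. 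Everything then reduces to reading off the coefficient of $(x-x_i)^{-1}$ in each object.

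For (i)$\Leftrightarrow$(ii), write $AQ^{-2}=\psi(x)(x-x_i)^{-2}$ with $\psi=A/R_i^2$ regular at $x_i$. The residue is $\psi'(x_i)=\psi(x_i)\bigl(a_A(x_i)-2R_i'(x_i)/R_i(x_i)\bigr)=\psi(x_i)S_i$, and $\psi(x_i)\ne0$.

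For (ii)$\Leftrightarrow$(iii), $V=-(Q''-a_AQ')/Q$ has at most a simple pole at $x_i$, with residue $-(Q''(x_i)-a_A(x_i)Q'(x_i))/Q'(x_i)=a_A(x_i)-2\sum_{j\ne i}(x_i-x_j)^{-1}=S_i$.

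For (iii)$\Leftrightarrow$(iv), I would prove the identity $\mathcal F_{A,Q}=2V+a_A'-\frac12 a_A^2$ globally, as an identity of rational functions with no hypothesis. Put $w=Q'/Q$, so $w'=Q''/Q-w^2$. Expanding gives
\[
 (a_A-2w)'-\tfrac12(a_A-2w)^2=a_A'-2Q''/Q+2w^2-\tfrac12a_A^2+2a_Aw-2w^2,
\]
which equals $a_A'-\tfrac12a_A^2-2(Q''-a_AQ')/Q$. Since $a_A'-\frac12a_A^2$ is regular at $x_i$, regularity of $\mathcal F_{A,Q}$ there is equivalent to regularity of $V$.

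The relation $Q''-a_AQ'+VQ=0$ is simply the definition of $V$ rearranged. For the pole statement, $Q''-a_AQ'$ and the displayed identity show that away from the roots of $Q$, poles of $V$ and $\mathcal F_{A,Q}$ can only come from $a_A$, hence from zeros and poles of $A$. At the roots of $Q$ they are regular under the equivalent conditions.

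The main obstacle is purely bookkeeping: verifying the sign and factor $2$ in $Q''(x_i)/Q'(x_i)$, and making sure $A$ is a unit at $x_i$ so that $\psi(x_i)\ne0$. No deeper input is needed, and the earlier Riccati identity \eqref{eq:riccati} serves as a consistency check for the global identity.
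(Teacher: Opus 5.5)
Your proposal is correct and follows essentially the same route as the paper. You compute $\res_{x_i}\omega_Q=\psi(x_i)S_i$ with $\psi(x_i)=A(x_i)/Q'(x_i)^2\neq0$, you identify the residue of $V$ at $x_i$ with $S_i$ via $Q''(x_i)/Q'(x_i)=2\sum_{j\ne i}(x_i-x_j)^{-1}$, and you derive $\mathcal F_{A,Q}=2V+a_A'-\tfrac12a_A^2$ unconditionally by expanding in $w=Q'/Q$. The only cosmetic difference is that the paper writes $w'=a_Aw-V-w^2$ directly, whereas you use $w'=Q''/Q-w^2$ and substitute the definition of $V$ afterward.
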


\begin{proof}
We use the local residue calculation underlying the Stieltjes--Bethe equations \cite[\S2.1]{Korotkin}. Put $Q=(x-x_i)Q_i$. Expansion at $x_i$ gives
$$
 \res_{x_i}\omega_Q=
 \left(\frac{A}{Q_i^2}\right)'(x_i)
 =\frac{A(x_i)}{Q'(x_i)^2}S_i.
$$
The prefactor is nonzero. Since $Q''(x_i)/Q'(x_i)=2\sum_{j\ne i}(x_i-x_j)^{-1}$, residue vanishing is equivalent to cancellation of the numerator of $V$ at each simple root of $Q$. This proves the first three equivalences.

Set $w=Q'/Q$. Then $w'=a_Aw-V-w^2$, and expansion of $(a_A-2w)'-(a_A-2w)^2/2$ gives the second identity in \eqref{eq:vv}. Because $a_A$ is regular at $x_i$, this identity proves the fourth equivalence and the pole assertion.
\end{proof}

The regularity condition on $V$ is essential: existence of some rational $V$ satisfying the displayed differential equation is automatic for every $Q$.

\begin{remark}[Rational logarithmic derivatives]\label[remark]{rem:weighted}
The root equations and the identity in \eqref{eq:vv} make sense for an arbitrary rational function $a(x)$, with $a_A$ replaced by $a$. Around a root $x_i$ outside its poles, choose a formal unit $A_i$ with $A_i'/A_i=a$ and $A_i(x_i)=1$. The same residue calculation applies to $A_iQ^{-2}dx$. Thus residue cancellation is a local condition and does not require a global rational $A$. When $a=\sum_j\gamma_j/(x-z_j)$ with rational $\gamma_j$, one can take $A=\prod_j(x-z_j)^{\gamma_j}$ on a finite algebraic cover. We call these \emph{weighted Stieltjes data}; the rational logarithmic-derivative formulation is classical \cite[\S1.2]{Scherbak}. The later global statements about rational differentials retain their rationality hypothesis. For Stieltjes equations with an arbitrary
rational external-field derivative, see \cite{BCG}.
\end{remark}

\subsection{Local exponents and global restrictions}

For $y''+\mathcal F(s)y/2=0$ with $\mathcal F(s)=c_{-2}s^{-2}+O(s^{-1})$, the local exponents are the roots of $z(z-1)+c_{-2}/2=0$. Their difference is defined up to sign. This is the regular-singular convention of \cite[\S7.2]{Kedlaya}. We now compute it directly from a developing differential.

\begin{remark}[Ordinary points at Stieltjes roots]
At a Stieltjes root $x_i$, the developing differential has a
double pole and zero residue. Its formal primitive therefore
has a simple pole, whose reciprocal is a formal local
coordinate. Thus $x_i$ is an ordinary point of the associated
projective connection, as expressed by
Proposition~\ref{prop:stieltjes}.
\end{remark}
More generally, if a rational developing differential has local
order $j$, its projective coefficient has leading term
$-j(j+2)/(2s^2)$, so the associated equation is regular singular
with exponent difference $j+1$, up to sign
\cite[\S7.2]{Kedlaya}. Over $\mathbb C$, primitives of rational
differentials have translation monodromy, which is trivial
precisely when all residues vanish; compare
\cite[\S2.1.3]{Korotkin}. These properties restrict which
projective connections arise from rational differentials.
For example, the Legendre hypergeometric equation with parameters
$(1/2,1/2;1)$ has zero exponent differences at $0,1,\infty$
\cite[\S15.10(i)]{DLMF}. A rational developing differential
would therefore have order $-1$ at these three points and
order $0$ or $-2$ elsewhere, contradicting the divisor degree
$-2$ on $\mathbb P^1$. This restriction concerns global rational
developing differentials and does not preclude an annular
analytic presentation.
\subsection{Wronskian pencils and nondegeneracy}

The rank-two reproduction construction
\cite[\S3, Lemmas~3.2--3.5]{MV} explains how moving Stieltjes
roots can represent a fixed projective connection. It gives
a useful restriction on the polynomial configurations to which
the nondegenerate deformation argument applies.

Let $A\in K[x]\setminus\{0\}$ have degree $D$, and let
$Q\in K[x]$ be a Stieltjes polynomial of degree $n\ge1$.
Since $AQ^{-2}dx$ has zero residues at all its finite poles,
partial fractions give a rational primitive $P/Q$, with
$P\in K[x]$. Thus
\begin{equation}\label{eq:wronski}
P'Q-PQ'=A.
\end{equation}
If $D\le2n-2$, the primitive can be chosen to vanish at infinity,
so that $\deg P<n$. The polynomials
$Q_t=Q+tP$
are then monic of degree $n$ and satisfy
\[
\left(\frac{P}{Q_t}\right)'=\frac{A}{Q_t^2}.
\]
Consequently, $Q_t$ remains Stieltjes whenever its roots are
simple and avoid the zeros of $A$, conditions holding on a
Zariski-open neighborhood of $t=0$.

Writing $x_i$ for the roots of $Q$, differentiation at $t=0$
gives the root tangent
\[
v_i=-\frac{P(x_i)}{Q'(x_i)}.
\]
This vector is nonzero because $P\ne0$ and $\deg P<n$.
Differentiating the Stieltjes equations along the pencil gives
$Jv=0$, where $J=(\partial S_i/\partial x_j)$.
Hence the configuration is degenerate.
If $D=2n-1$, the coefficient of $x^{-1}$ in $A/Q^2$ is the
nonzero leading coefficient of $A$, contradicting the
vanishing residue at infinity. Thus a nondegenerate
Stieltjes configuration for polynomial $A$ requires $D\ge2n$.

For the pencil above, the developing maps satisfy
\[
h_t=\frac{P}{Q+tP}=\frac{h_0}{1+th_0}.
\]
Their Schwarzian coefficient is therefore independent of $t$
by projective invariance. This distinguishes motion of the
Stieltjes roots from variation of the associated projective
connection.

\section{Deformations and explicit families}\label{sec:families}
\subsection{A uniform annular formula}
Let $K/\mathbb Q_p$ be finite and let $U$ be a reduced affinoid space with algebra $B=\mathcal O(U)$. Concretely, $B$ is a quotient of a Tate algebra, equipped with a complete submultiplicative norm; see \cite[\S5.1]{BGR}. A relative Robba series is a Laurent series with coefficients in $B$ converging in a fixed Banach norm at every radius $\rho<t<1$, for one common $\rho$. We write $\R_B$ for the union of these rings over $\rho$; for the general theory of annular connections with parameters, see \cite{KedFamilies}. The derivation $\theta$ and $\varphi_d$ act on $q$ and fix $B$. All connections in this section are relative to $B$: no connection in the parameter directions is specified. This definition requires convergence up to the outer boundary in the Robba sense; an analytic function on a single closed annulus with outer radius less than $1$ does not supply such a germ.

The next theorem assumes a single dominant monomial uniformly in the parameters. This is an explicit condition on the Banach-valued series. Pointwise unit conditions alone are not substituted for it. Its role is to construct a square root and a gauge over the whole parameter algebra, including points where the extension splits.

\begin{theorem}[Uniform Frobenius in a family]\label[theorem]{thm:family}
Suppose $p\ne2$ and
\begin{equation}\label{eq:family-dom}
 g=cq^m(1+e)\in\R_B^\times,\qquad c\in B^\times,
 \qquad \|e\|_t<1\quad(\rho<t<1).
\end{equation}
Let $r\in B$ be its constant Laurent coefficient, let $H\in\R_B$ satisfy $\theta H=g-r$, and put
$$
 v=(1+e)^{1/2},\qquad \widehat P_g=v^{-1}P_g.
$$
Then $M(g)$ is analytically gauge equivalent to $\theta Z=A_{m,r}Z$, and
\begin{equation}\label{eq:family-frob}
 \Phi_g=\widehat P_g\,q^{(d-1)m/2}
       \diag(1,d)\,\varphi(\widehat P_g)^{-1}
\end{equation}
is a Frobenius structure over $\R_B$. The matrix is defined on the entire parameter space, including the locus $r=0$, and $\det\Phi_g=d$.
\end{theorem}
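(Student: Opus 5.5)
The plan is to run the proofs of \cref{thm:normal} and \cref{cor:four} verbatim over $\R_B$, checking at each step that only algebraic identities or estimates uniform in the Banach norm are used. No pointwise argument in the parameters is needed.

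First, the relative analytic input. Since $\|e\|_t<1$ on every radius in $(\rho,1)$ and $p$ is odd, the binomial coefficients of $(1+x)^{1/2}$ and $(1+x)^{-1/2}$ lie in $\mathbb Z_p$. Hence $v=(1+e)^{1/2}$ and $v^{-1}$ converge in $\R_B$ on the same open annulus, and $v^2=1+e$. This uses submultiplicativity of the Banach norm, exactly as in \cref{lem:units}. Next, $H$ exists: put $H=\sum_{n\ne0}(b_n/n)q^n$ with $b_n\in B$. The two-radius estimate of \cref{lem:integration} goes through with $|b_n|$ replaced by $\|b_n\|$, because $|1/n|_p$ is a scalar factor of polynomial growth. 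Also $g^{-1}=c^{-1}q^{-m}v^{-2}$ lies in $\R_B$, so $g$ is a unit.

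Second, the gauge. Put $a=-\theta g/(2g)=-m/2-\theta v/v$. The identity $\theta P_g=B_{F_g}P_g-P_g(a\I+rN)$ from \cref{thm:normal} is a formal consequence of $\theta g=-2ag$, $\theta H=g-r$ and the Riccati identity \eqref{eq:riccati}, so it holds over $\R_B$. Since $\det P_g=g$ is a unit, $P_g$ is invertible. Multiplying by the scalar unit $v^{-1}$ replaces $a\I$ by $a\I+(\theta v/v)\I=-\tfrac m2\I$. Therefore $\widehat P_g$ gauges $M(g)$ to $A_{m,r}=-\tfrac m2\I+rN$, and $\det\widehat P_g=g/v^2=cq^m$. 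This already proves the first assertion.

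Third, the Frobenius structure. In the normal-form basis, a Frobenius matrix $C$ must satisfy $\theta C=A_{m,r}C-C\,dA_{m,r}$; the pullback of the constant matrix $A_{m,r}$ is $dA_{m,r}$, since $\varphi$ fixes $B$. For $C=q^{(d-1)m/2}\diag(1,d)$ the exponent $(d-1)m/2$ is an integer because $d$ is odd, and $\theta C=\tfrac{(d-1)m}{2}C$. The scalar parts of the right side give exactly the same term. The nilpotent parts give $rN\diag(1,d)-dr\diag(1,d)N=rdN-rdN=0$.

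This cancellation is the key point, and I expect it to be the main place to be careful. It is an identity in $r$, not a case split on $r\ne0$ or $r=0$. The choice $T=\diag(1,d)$ lies in the $r\ne0$ family of \eqref{eq:constant-T} (with $b=1$, $c=0$) and is also allowed when $r=0$, so a single formula works on all of $U$.

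Finally, transform back using the rule \eqref{eq:gauge} for the pulled-back basis: $\Phi_g=\widehat P_gC\varphi(\widehat P_g)^{-1}$ satisfies \eqref{eq:horizontal}. Its entries lie in $\R_B$, because $\varphi$ preserves $\R_B$, shrinking $\rho$ to $\rho^{1/d}$. For the determinant,
\[
\det\Phi_g=\frac{cq^m\cdot q^{(d-1)m}\,d}{\varphi(cq^m)}=\frac{c\,q^{dm}\,d}{c\,q^{dm}}=d,
\]
which is a unit. This gives invertibility over $\R_B$ and the determinant claim.
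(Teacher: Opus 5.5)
Your proposal is correct and follows the paper's proof essentially step for step. You build $H$ and $v^{\pm1}$ from the Banach-norm versions of \cref{lem:integration} and the binomial series, run the gauge identity of \cref{thm:normal} unchanged over $\R_B$, use the $r$-independent identity $N\diag(1,d)=d\diag(1,d)N$ for horizontality, and compute the determinant from $\det\widehat P_g=cq^m$ and $\varphi(c)=c$, exactly as the paper does; your extra checks of the scalar twist by $v^{-1}$ and of the transport back to the companion basis are accurate.
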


\begin{proof}
The estimates in Lemma~\ref{lem:integration}, with the Banach norm of $B$, give $H$ on the same open annulus. The binomial series converges uniformly on each closed subannulus, giving $v$ and $v^{-1}$. The matrix calculation in Theorem~\ref{thm:normal} takes place over $\R_B$ unchanged. Finally,
$$
 N\diag(1,d)=d\diag(1,d)N
$$
is valid without dividing by $r$, which proves horizontality of \eqref{eq:family-frob} even where $r$ vanishes. Since $\det\widehat P_g=cq^m$ and $\varphi(c)=c$, its determinant is $d$.
\end{proof}

The formula separates analytic Frobenius existence from the change in differential-module type at $r=0$. For example, for $|t|\le|p|$ the family $g_t=q+t$ has common winding number $1$, residue $t$, and a Frobenius matrix given by \eqref{eq:family-frob}. Its differential module splits precisely at $t=0$.

\subsection{Analytic deformation of the Stieltjes equations}

For the Stieltjes deformation problem, write $A_u(x)=P(u,x)/R(u,x)$ with $P,R\in B[x]$, and restrict to the configuration space where all denominators and numerators are invertible. The logarithmic differential of
$$
 \mathcal M(X)=\frac{\prod_iA_u(x_i)}{\prod_{i<j}(x_i-x_j)^2}
$$
has components $S_i(u,X)$ from \eqref{eq:Stieltjes}. This is the reciprocal convention for the usual rank-two master function, which has the same critical set; see \cite[\S1.2]{Scherbak} and \cite[\S2]{MV}. Here ``logarithmic differential'' means $d\mathcal M/\mathcal M$, so no analytic branch of a logarithm is chosen. The Jacobian $J=(\partial S_i/\partial x_j)$ has entries
\begin{equation}\label{eq:hessian}
 J_{ij}=\begin{cases}
 a_{A_u}'(x_i)+2\displaystyle\sum_{\ell\ne i}(x_i-x_\ell)^{-2},&i=j,\\[1mm]
 -2(x_i-x_j)^{-2},&i\ne j.
 \end{cases}
\end{equation}
A configuration is nondegenerate when $\det J\ne0$. For $X=(x_1,\ldots,x_n)$, define
$S(u,X)=\bigl(S_1(u,X),\ldots,S_n(u,X)\bigr)$. Thus, $X$ is a Stieltjes configuration for $A_u$ precisely when
$S(u,X)=0$.

The following proposition combines the local analytic deformation of
nondegenerate Stieltjes configurations with the annular Frobenius
criterion developed above.
\begin{proposition}[Stieltjes deformation with annular Frobenius]\label[proposition]{thm:stieltjes-family}
Let $u_0\in U(K)$ and let $X_0\in K^n$ be a nondegenerate Stieltjes configuration for $A_{u_0}$. After restricting to a neighborhood of $(u_0,X_0)$, there is a unique analytic branch $X(u)$ in a sufficiently small polydisc around $X_0$ satisfying $X(u_0)=X_0$ and $S(u,X(u))=0$. The terms $V_u$ and $\mathcal F_{A_u,Q_u}$ in \eqref{eq:vv} depend analytically on $u$ and are regular at every moving root.

Let $t\in\mathcal R_K$ be fixed, with $\theta t\in\mathcal R_K^\times$, and assume that \[
P(u,t),\qquad R(u,t),\qquad Q_u(t)
\]
are units in $\mathcal R_B$. Then $g_u=\frac{A_u(t)}{Q_u(t)^2}\theta t\in \mathcal R_B^\times$ and its Schwarzian coefficient is
\begin{equation}\label{eq:pullback-F}
 F_{g_u}=(\theta t)^2\mathcal F_{A_u,Q_u}(t)+\Sch{t}{\theta}.
\end{equation}
If $p\ne2$, each fiber admits Frobenius. If, in addition, the family satisfies \eqref{eq:family-dom}, formula~\eqref{eq:family-frob}
gives an analytic Frobenius structure over $\mathcal R_B$. A primitive $h_u\in\R_B$ with $\theta h_u=g_u$ exists exactly when the constant Laurent coefficient of $g_u$ is zero in $B$.
\end{proposition}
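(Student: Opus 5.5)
The plan is to treat the statement as four separate pieces: an implicit-function step, a regularity step, a chain-rule computation on the annulus, and an application of the Frobenius results already proved.

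\emph{Analytic branch.} The system $S(u,X)=0$ is analytic in $(u,X)$ on the open configuration space, because the entries of $S$ are built from $a_{A_u}=A_u'/A_u$ and from the $(x_i-x_j)^{-1}$. Its $X$-Jacobian at $(u_0,X_0)$ is the matrix $J$ of \eqref{eq:hessian}, which is invertible by the nondegeneracy hypothesis. I would apply the non-archimedean analytic implicit function theorem (e.g. over $B$ via \cite{BGR}, after shrinking to a polydisc) to obtain the unique branch $X(u)$. Then $Q_u=\prod(x-x_i(u))$ has analytic coefficients.

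\emph{Regularity.} For each $u$ in the neighborhood, the roots stay simple and avoid the divisor of $A_u$. \Cref{prop:stieltjes} (iii)--(iv) therefore shows that $V_u$ and $\mathcal F_{A_u,Q_u}$ are regular at every moving root. Their analytic dependence on $u$ follows from the formulas in \eqref{eq:Stieltjes} and \eqref{eq:vv}. To make this uniform in $u$, I would write $V_u=-(Q_u''-a_{A_u}Q_u')/Q_u$ and note that the numerator lies in the ideal generated by $Q_u$, since it vanishes at every simple root.

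\emph{Pullback to the annulus.} Since $P(u,t)$, $R(u,t)$ and $Q_u(t)$ are units in $\mathcal R_B$ and $\theta t$ is a unit, $g_u$ is a unit. For formula \eqref{eq:pullback-F}, let $\ell=\log$ of the $x$-developing differential $A_uQ_u^{-2}$, formally. Then $\theta g_u/g_u=\theta t\cdot(a_{A_u}-2Q_u'/Q_u)(t)+\theta^2t/\theta t$. Substituting this into $F_g=\theta u-u^2/2$, the cross terms cancel exactly as in the proof of \eqref{eq:chain} with $D=\theta$ and $x=t$. This gives $(\theta t)^2\mathcal F(t)+\Sch{t}{\theta}$. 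The computation is purely algebraic, so it is valid in $\mathcal R_B$ without choosing any primitive.

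\emph{Frobenius and primitive.} Each fiber is $M(g)$ for a unit $g\in\RK^\times$ (after specialization $B\to K$ at a $K$-point), so for $p\ne2$ \cref{cor:four} gives Frobenius. Under \eqref{eq:family-dom}, \cref{thm:family} gives \eqref{eq:family-frob} directly.

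For the primitive, the relative version of \cref{lem:integration} (same estimates, with the Banach norm of $B$) writes $g_u=r+\theta H$ with $H\in\mathcal R_B$ having zero constant term. If $r=0$, take $h_u=H$. Conversely, $\theta h_u=g_u$ forces the constant coefficient of $g_u$ to vanish, since $\theta$ kills constant terms.

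The main obstacle is the uniformity in the first step. The analytic branch must give coefficients of $Q_u$ in $B$ (or a shrunken affinoid), so that $Q_u(t)\in\mathcal R_B$ makes sense. I would handle this by shrinking $U$ to an affinoid neighborhood on which the implicit-function power series converges.
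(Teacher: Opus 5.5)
Your proposal is correct and follows the paper's proof essentially step for step. The steps are the implicit-function branch, divisibility by $Q_u$ for uniform regularity, and the chain-rule cancellation from $\theta g_u/g_u=\theta t\,(a_{A_u}-2Q_u'/Q_u)(t)+\theta^2t/\theta t$. The last part applies Corollary~\ref{cor:four}, Theorem~\ref{thm:family}, and the Banach-valued integration lemma.

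The paper differs from you in two places:
\begin{itemize}
\item It proves the branch by an explicit contraction $Z\mapsto Z-J_0^{-1}S(u,X_0+Z)$ in the complete module $B^n$. This requires no local coordinates on a possibly singular $U$, whereas you invoke a general implicit-function theorem.
\item It makes your divisibility claim precise. It clears denominators to get $W=DQ_u''-LQ_u'$ with $a_{A_u}=L/D$, and divides by the monic $Q_u$. The remainder vanishes because the Vandermonde $\prod_{i<j}(x_j(u)-x_i(u))$ is a unit in $B$ after shrinking $U$.
\end{itemize}
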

\begin{proof}
For the analytic implicit-function argument, let $J_0=\partial_XS(u_0,X_0)$, which is invertible by
nondegeneracy, and set
\[
T_u(Z)=Z-J_0^{-1}S(u,X_0+Z).
\]
Choose an affinoid neighborhood of $u_0$ and a closed polydisc
around $X_0$ on which the denominators in $S$ are invertible.
Then $T$ is analytic,
$T_{u_0}(0)=0,
\text{ and }
\partial_ZT_{u_0}(0)=0$.
We may therefore choose a sufficiently small radius $r>0$ and
shrink the parameter neighborhood so that
\[
\|T(0)\|\le r,
\qquad
\|T(Z)-T(W)\|\le c\|Z-W\|,
\qquad 0<c<1,
\]
whenever $\|Z\|,\|W\|\le r$. Here we continue to denote the
restricted affinoid neighborhood by $U$, set $B=\mathcal O(U)$,
and equip $B^n$ with the maximum of a complete submultiplicative
affinoid norm on its coordinates. To obtain these bounds, expand
$T$ in the root variables. Shrinking $r$ controls the
higher-degree terms, while shrinking $U$ makes the linear
coefficients sufficiently small and the constant term bounded
by $r$. These coefficient bounds hold uniformly over $B$. Since $B$ is an affinoid $K$-algebra, it is complete for an
affinoid Banach-algebra norm; see
\cite[\S\S5.1.1 and 6.1.1]{BGR}.

The ultrametric inequality shows that $T$ maps the closed ball
of radius $r$ in $B^n$ into itself. Starting with $Z_0=0$, define
$Z_{k+1}=T(Z_k)$.
The contraction estimate gives
\[
\|Z_{k+1}-Z_k\|
\le c^k\|Z_1-Z_0\|,
\]
so completeness yields a limit $Z\in B^n$. This limit is the
unique fixed point in the ball. Consequently,
\[
X(u)=X_0+Z(u)
\]
is analytic and satisfies $S(u,X(u))=0$. Evaluating the iteration
at $u_0$ gives $Z_k(u_0)=0$ for every $k$, hence
$X(u_0)=X_0$. The same uniform contraction bounds give uniqueness
among analytic branches taking values in the chosen polydisc.

Write $X(u)=(x_1(u),\ldots,x_n(u))$ and
\[
Q_u(x)=\prod_{i=1}^n(x-x_i(u))\in B[x].
\]
After shrinking $U$ if necessary, the elements
\[
P(u,x_i(u)),\qquad R(u,x_i(u)),\qquad
x_i(u)-x_j(u)\quad(i\ne j)
\]
are units in $B$. In particular, the roots remain distinct and
avoid the zeros and poles of $A_u$.

We next verify cancellation at the moving roots over $B$.
With primes denoting differentiation in $x$, put
\[
D=PR,\qquad L=P'R-PR',
\qquad
a_{A_u}=\frac{A_u'}{A_u}=\frac{L}{D}.
\]
The Stieltjes equations and the identity
\[
\frac{Q_u''(x_i(u))}{Q_u'(x_i(u))}
=2\sum_{j\ne i}\frac{1}{x_i(u)-x_j(u)}
\]
imply that the polynomial
$W=DQ_u''-LQ_u'$
vanishes at every $x_i(u)$. Division by the monic polynomial
$Q_u$ gives
\[
W=Q_u\widetilde W+H,
\qquad
\deg H<n,
\]
with $\widetilde W,H\in B[x]$. Since $H(x_i(u))=0$ for every
$i$, and the Vandermonde determinant
\[
\prod_{i<j}(x_j(u)-x_i(u))
\]
is a unit in $B$, all coefficients of $H$ vanish. Thus
\[
V_u=\frac{a_{A_u}Q_u'-Q_u''}{Q_u}
=-\frac{\widetilde W}{D}.
\]
This expression depends analytically on $u$ and is regular at
each moving root. Formula~\eqref{eq:vv}, equivalently
\[
\mathcal F_{A_u,Q_u}
=a_{A_u}'-\frac12a_{A_u}^2+2V_u,
\]
gives the same conclusions for $\mathcal F_{A_u,Q_u}$.

For the annular pullback, the unit assumptions give
$g_u=P(u,t)\theta t/[R(u,t)Q_u(t)^2]\in\R_B^\times$.
To verify the chain rule without choosing a primitive, set
$b_u=a_{A_u}-2Q_u'/Q_u$, $v=\theta t$, and $c=\theta v/v$.
Then $\theta g_u/g_u=v\,b_u(t)+c$, so the mixed terms cancel:
\[
 F_{g_u}=v^2\left(b_u'(t)-\frac12b_u(t)^2\right)
             +\theta c-\frac12c^2.
\]
The two terms are $(\theta t)^2\mathcal F_{A_u,Q_u}(t)$ and
$\Sch{t}{\theta}$, respectively, proving \eqref{eq:pullback-F}.

For $p\ne2$, specialization preserves the unit $g_u$, and
\cref{cor:four} supplies Frobenius on each fiber. Under the
additional uniform hypothesis \eqref{eq:family-dom},
\cref{thm:family} supplies \eqref{eq:family-frob} over $\R_B$.

Finally, if $g_u=\sum g_mq^m$, a primitive requires $g_0=0$.
When this holds, the series
\[
 h_u=\sum_{m\ne0}\frac{g_m}{m}q^m
\]
converges in $\R_B$ by the Banach-valued version of
\cref{lem:integration} used in \cref{thm:family}, and
$\theta h_u=g_u$.
\end{proof}

\begin{remark}[What may move]
If the complete divisor of the rational function $A_u$ is fixed on $\mathbb P^1$, then $A_u$ differs from a fixed rational function by a scalar. Its logarithmic derivative is constant in $u$, so the unique nondegenerate Stieltjes branch and its projective coefficient are constant. Nontrivial deformations of a nondegenerate branch therefore require variation of the divisor of $A_u$. Throughout \cref{thm:stieltjes-family}, the cancellation statement concerns the moving roots of $Q_u$; the divisor of $A_u$ may move.
\end{remark}

\begin{example}[A nonconstant family with a moving Stieltjes root]\label{ex:moving}
Let
$$
 A_{a,b}(x)=(x-a)^2+b,\qquad Q_a(x)=x-a,\qquad b\ne0.
$$
The root $x=a$ is Stieltjes and its Jacobian is $2/b$. Moreover,
$$
 \frac{A_{a,b}}{Q_a^2}dx=d\left(x-a-\frac{b}{x-a}\right),
 \qquad
 \mathcal F_{A_{a,b},Q_a}(x)=\frac{6b}{((x-a)^2+b)^2}.
$$
With $t(q)=q$, one obtains
\begin{equation}\label{eq:moving-explicit}
 \begin{split}
 H_{a,b}(q)&=q-\frac{b}{q-a},\\
 g_{a,b}(q)&=q\left(1+\frac{b}{(q-a)^2}\right),\\
 F_{a,b}(q)&=-\frac12+\frac{6bq^2}{((q-a)^2+b)^2}.
 \end{split}
\end{equation}
Fix a parameter affinoid with $|a|\le a_*<\rho<1$ and $0<b_-\le|b|\le b_*<\rho^2$. Then $g_{a,b}$ has winding number $1$ and zero residue on $\rho<|q|<1$, uniformly in the parameters. For every odd $p$, set
$$
 v_{a,b}=\left(1+\frac{b}{(q-a)^2}\right)^{1/2}.
$$
Substitution of $H_{a,b},g_{a,b},v_{a,b}$ into \eqref{eq:P} and \eqref{eq:family-frob} gives an explicit analytic Frobenius matrix. Thus this family has moving roots, nonconstant projective coefficients, and Frobenius throughout the parameter domain. \cref{thm:rigidity} excludes constant projective Frobenius equivariance of every $H_{a,b}$.
\end{example}

\begin{example}[A nondegenerate family across zero residue]\label{ex:residue-crossing}
Add a parameter $t$ to the preceding example by setting
\[
 A_{a,b,t}(x)=(x-a)^2\left(1+\frac{t}{x}\right)+b,
 \qquad Q_a(x)=x-a.
\]
Assume $a,b\ne0$ and $1+t/a\ne0$. At $x=a$, one has $A=b$, $A'=0$, and $A''=2(1+t/a)$, so this is a nondegenerate Stieltjes configuration with Jacobian $2(1+t/a)/b$. Its annular developing differential is
\[
 g_{a,b,t}\frac{dq}{q}
 =\left(1+\frac{b}{(q-a)^2}+\frac{t}{q}\right)dq,
 \qquad
 g_{a,b,t}=\theta\left(q-\frac{b}{q-a}\right)+t.
\]
Choose a parameter affinoid satisfying
\[
 0<a_-\le |a|\le a_*<\rho<1,\quad
 0<b_-\le|b|\le b_*<\rho^2,\quad
 |t|\le t_*<a_-.
\]
Then $g_{a,b,t}/q$ has norm distance less than one from $1$ on the common annulus. Its winding number is $1$, its residue is $t$, and the Stieltjes Jacobian never vanishes. For odd $p$, the Frobenius matrix of \cref{thm:family} is therefore analytic throughout this family, while the differential module is split precisely on $t=0$. At that locus the projective coefficient reduces to the nonconstant explicit expression \eqref{eq:moving-explicit}. This exhibits a change of annular extension class without degeneracy of the Stieltjes critical point.
\end{example}

\subsection{Jacobi configurations and a quantitative nondegeneracy test}

For the spectral calculation, allow rational parameters $\alpha,\beta>-1$ under their real embedding, and let $n\ge1$. Let $Q_n$ be the monic normalization of the Jacobi polynomial $P_n^{(\alpha,\beta)}$. Set
$$
 A(x)=(1-x)^{-\alpha-1}(1+x)^{-\beta-1},\qquad
 \lambda_k=k(k+\alpha+\beta+1).
$$
When the parameters are not integers, $A$ is interpreted through its rational logarithmic derivative as in Remark~\ref{rem:weighted}. The Jacobi equation \cite[\S18.8]{DLMF} is
$$
 (1-x^2)Q_n''+
 [\beta-\alpha-(\alpha+\beta+2)x]Q_n'+\lambda_nQ_n=0.
$$
Its roots are simple and lie in $(-1,1)$ under the real embedding \cite[\S18.16(ii)]{DLMF}. It is the Stieltjes equation for this $A$, with $V_n=\lambda_n/(1-x^2)$. Formula~\eqref{eq:vv} gives
\begin{equation}\label{eq:jacobi-F}
 \begin{split}
 \mathcal F_n(x)=\frac{1-\alpha^2}{2(1-x)^2}
 +\frac{1-\beta^2}{2(1+x)^2}
 +\frac{(\alpha+1)(\beta+1)+2\lambda_n}{1-x^2}.
 \end{split}
\end{equation}
The exponent differences at $1,-1,\infty$ are $\alpha,\beta,2n+\alpha+\beta+1$, and every root of $Q_n$ is an ordinary point of this projective coefficient.

The following Hessian identity is the Stieltjes-coordinate form of the spectral-gap description for perturbations of classical polynomial zeros; compare \cite[Theorem~3.1 and \S4.3]{Sasaki}. We include a proof to make the nondegeneracy test explicit.

\begin{proposition}[Jacobi Hessian spectrum]\label[proposition]{prop:jacobi-hessian}
Let $x_1,\ldots,x_n$ be the roots of $Q_n$, let $J$ be \eqref{eq:hessian} for the rational logarithmic derivative of $A$, and put $D=\diag(1-x_i^2)$. Then $DJ$ is similar to the operator $\mathcal L+\lambda_n$ on polynomials of degree less than $n$, where
$$
 \mathcal L=(1-x^2)\frac{d^2}{dx^2}
      +[\beta-\alpha-(\alpha+\beta+2)x]\frac{d}{dx}.
$$
Its eigenvalues are $\lambda_n-\lambda_k$, $0\le k<n$. Consequently,
\begin{equation}\label{eq:jacobi-det}
 \det J=
 \frac{\displaystyle n!\prod_{k=0}^{n-1}(n+k+\alpha+\beta+1)}
      {\displaystyle\prod_{i=1}^n(1-x_i^2)}\ne0.
\end{equation}
The same nonvanishing holds after every characteristic-zero $p$-adic embedding of a splitting field of $Q_n$.
\end{proposition}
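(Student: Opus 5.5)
Regard $J$ as the Hessian of the logarithmic master function and compute the linearization of the Stieltjes equations through polynomial perturbations of $Q_n$. Concretely, for a vector $\varepsilon=(\varepsilon_1,\dots,\varepsilon_n)$ let $R\in K[x]$, $\deg R<n$, be the unique polynomial with $R(x_i)=-\varepsilon_iQ_n'(x_i)$. This is Lagrange interpolation, so $R\mapsto\varepsilon$ is a linear isomorphism from polynomials of degree $<n$ onto $K^n$. Then $Q_n+sR$ has roots $x_i+s\varepsilon_i+O(s^2)$. The plan is to compute the first-order variation of the Stieltjes residuals along this perturbation in two ways.

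First, each Stieltjes equation $S_i=0$ is equivalent to the vanishing at $x_i$ of $E(Q):=(1-x^2)Q''+[\beta-\alpha-(\alpha+\beta+2)x]Q'$ divided by $(1-x^2)Q'$. This is the content of \cref{prop:stieltjes}: the weight $a=-(\alpha+1)/(1-x)\cdot(-1)\ldots$, written precisely as $a=(\alpha+1)/(1-x)-(\beta+1)/(1+x)$, matches the Jacobi coefficient after multiplication by $1-x^2$. More precisely, at a simple root $x_i$ of any squarefree $Q$ one has
\[
(1-x_i^2)Q'(x_i)\,S_i(X)=-\bigl(\mathcal L Q\bigr)(x_i)
\]
up to a fixed sign that I will pin down, using $Q''(x_i)/Q'(x_i)=2\sum_{j\ne i}(x_i-x_j)^{-1}$. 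Apply this to $Q=Q_n+sR$ and differentiate at $s=0$. Since $\mathcal LQ_n=-\lambda_nQ_n$ vanishes at the $x_i$, and $S_i(X_0)=0$, the product-rule terms hitting $S_i$ or $Q_n$ evaluated at $x_i$ drop out. What remains is
\[
(1-x_i^2)Q_n'(x_i)(J\varepsilon)_i=-\bigl(\mathcal L R+\lambda_nR\bigr)(x_i)+(\text{term from moving }x_i\text{ in }\mathcal LQ_n).
\]
The last term equals $-\lambda_nQ_n'(x_i)\varepsilon_i\cdot(\ldots)$, and it is absorbed once one notes that $(\mathcal LQ_n)'(x_i)=-\lambda_nQ_n'(x_i)$. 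I expect this bookkeeping to be the main obstacle: getting the signs so that exactly $\mathcal L+\lambda_n$ appears. I would verify it on $n=1$, where $J$ is the scalar $a'(x_1)$, before trusting the general case.

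Second, because $\mathcal L$ preserves degree and is triangular on the monomial basis with diagonal entries $-k(k+\alpha+\beta+1)=-\lambda_k$, the operator $\mathcal L+\lambda_n$ preserves the space of polynomials of degree $<n$. The identity above then reads $DJ=\Psi^{-1}(\mathcal L+\lambda_n)\Psi$ up to the diagonal conjugation by $\diag(Q_n'(x_i))$, where $\Psi$ is the interpolation isomorphism. This gives the similarity, and the eigenvalues are $\lambda_n-\lambda_k=(n-k)(n+k+\alpha+\beta+1)$ for $0\le k<n$.

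Taking determinants, $\prod_{k<n}(n-k)=n!$, so
\[
\det D\,\det J=n!\prod_{k<n}(n+k+\alpha+\beta+1),
\]
which is \eqref{eq:jacobi-det}. Nonvanishing follows because $\alpha+\beta>-2$ makes each factor $n+k+\alpha+\beta+1>0$ under the real embedding, and the roots lie in $(-1,1)$. The resulting identity is an equality of algebraic numbers in the splitting field. Its right side is a nonzero rational number divided by a nonzero algebraic number, so it stays nonzero under every $p$-adic embedding.
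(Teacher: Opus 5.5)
This is essentially the paper's own argument: the same interpolation isomorphism $R(x_i)=-Q_n'(x_i)\varepsilon_i$, the same identity $(1-x_i^2)Q'(x_i)S_i=-(\mathcal LQ)(x_i)$ differentiated along the root tangent, and triangularity of $\mathcal L$ on monomials, with the sign coming out exactly as you want. The one thing to fix is your displayed first-order formula, which counts $\lambda_nR$ twice. With $\mathcal L$ alone, the moving-root term is $-(\mathcal LQ_n)'(x_i)\varepsilon_i=\lambda_nQ_n'(x_i)\varepsilon_i=-\lambda_nR(x_i)$, and this is the entire $\lambda_nR$ contribution rather than an addition to it. If instead you apply the identity to $(\mathcal L+\lambda_n)Q$, as the paper does, that term vanishes because $(\mathcal L+\lambda_n)Q_n\equiv0$. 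Either way you get $DJ=\Psi^{-1}(\mathcal L+\lambda_n)\Psi$ exactly, with no further diagonal conjugation, since $\Psi$ already contains the factors $Q_n'(x_i)$.
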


\begin{proof}
We express the polynomial-zero perturbation argument of \cite[Theorem~3.1]{Sasaki} in root coordinates. A root tangent $v=(v_i)$ determines the polynomial $R$ of degree less than $n$ given by
$
 R(x_i)=-Q_n'(x_i)v_i.
$
This is an isomorphism by interpolation. For an arbitrary monic polynomial $Q$ of degree $n$, evaluate $E(Q)=(\mathcal L+\lambda_n)Q$ at its roots:
$$
 E(Q)(x_i)=-(1-x_i^2)Q'(x_i)S_i.
$$
At $Q_n$ the polynomial $E(Q_n)$ is identically zero. Differentiating this identity along the root tangent therefore gives
$$
 (\mathcal L+\lambda_n)R(x_i)
 =-(1-x_i^2)Q_n'(x_i)(Jv)_i.
$$
The interpolation isomorphism conjugates $DJ$ to $\mathcal L+\lambda_n$. On the monomial basis, $\mathcal L$ is triangular with diagonal $-\lambda_k$, $0\le k<n$. Hence the determinant of $DJ$ is
$$
 \prod_{k=0}^{n-1}(\lambda_n-\lambda_k)
 =n!\prod_{k=0}^{n-1}(n+k+\alpha+\beta+1),
$$
which proves \eqref{eq:jacobi-det}. All factors are nonzero algebraic numbers, so their images under field embeddings remain nonzero.
\end{proof}

\begin{corollary}[Annular type of the Jacobi family]\label[corollary]{cor:jacobi-type}
Suppose now that $\alpha,\beta$ are nonnegative integers, and fix $n$ and an odd prime. After a finite scalar extension containing the roots, choose $\varepsilon\in K^\times$ sufficiently small and pull back by $x=q/\varepsilon$. The resulting $g$ is a Robba-ring unit with residue zero and
$$
 m(g)=-(2n+\alpha+\beta+1).
$$
Its differential-module isomorphism class depends only on $\alpha+\beta+1$ modulo $2$. In particular, increasing $n$ does not change that annular class.
\end{corollary}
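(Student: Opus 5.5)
The plan is to compute the winding number and the residue of the pulled-back developing differential directly from its expansion at $x=\infty$, and then read off the class from \cref{cor:four}.

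\textbf{Setup.} Since $\alpha,\beta$ are nonnegative integers, $A(x)=(1-x)^{-\alpha-1}(1+x)^{-\beta-1}$ is a genuine rational function. The developing differential is $\omega=AQ_n^{-2}\,dx$. Pulling back by $x=q/\varepsilon$ and writing $\omega=g\,dq/q$ gives
\[
g=x\,A(x)\,Q_n(x)^{-2},\qquad x=q/\varepsilon.
\]
Let $K$ contain the roots $x_1,\dots,x_n$ of $Q_n$, and set $R=\max(1,|x_i|)$. Fix $0<\rho<1$ and choose $|\varepsilon|<\rho/R$. Then on $\rho<|q|<1$ we have $|x|>R$, so every point where $A$ or $Q_n$ vanishes or has a pole lies strictly inside the disc $|x|<|x|_{\text{annulus}}$. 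This is exactly the regime in which every relevant factor is dominated by its top monomial.

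\textbf{Dominant monomial.} I would factor each piece as follows:
\[
1\mp x=\mp x\,(1\mp x^{-1}),\qquad x-x_i=x\,(1-x_i/x).
\]
Each correction factor has Gauss norm distance less than one from $1$ at every radius $\rho<t<1$, because $|1/x|<1$ and $|x_i/x|<1$ there. The same then holds for their inverses and their finite product. This gives
\[
g=c\,q^{m}(1+e),\qquad \|e\|_t<1,\qquad m=1-(\alpha+1)-(\beta+1)-2n=-(2n+\alpha+\beta+1),
\]
with $c\in K^\times$ the product of the signs and powers of $\varepsilon$. By \cref{lem:units}, $g$ is a unit of $\RK$ and $m(g)=-(2n+\alpha+\beta+1)$.

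\textbf{Residue.} The residue $\res(g\,dq/q)$ is the constant Laurent coefficient of $g$. Since $g=c\,q^m(1+e)$, where $1+e$ is a convergent series in nonpositive powers of $q$ (equivalently in $1/x$), only powers $q^{m-k}$ with $k\ge0$ appear. Because $m\le -2<0$, no constant term occurs, so $r=0$. I note that this argument uses only the order at infinity. It does not need the Stieltjes cancellation at the $x_i$, nor any information about the residues at $\pm1$.

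\textbf{Conclusion.} \Cref{cor:four} classifies the module by $m\bmod 2$ together with the vanishing of $r$. Here $r=0$ always, and $m\equiv\alpha+\beta+1\pmod 2$ because $2n$ is even. Hence the isomorphism class depends only on $\alpha+\beta+1\bmod 2$ and is independent of $n$.

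\textbf{Main obstacle.} The step needing the most care is the uniform convergence on the whole Robba germ. One must choose $\varepsilon$ relative to both $\rho$ and the $p$-adic sizes of the roots, so that the expansions at infinity converge at every radius up to $1$ and not just on one closed subannulus. I expect that choosing $|\varepsilon|<\rho/R$ as above handles this.
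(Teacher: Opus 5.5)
Your proposal is correct and follows essentially the same route as the paper. You choose $\varepsilon$ so that all zeros and poles of $A$ and $Q_n$ lie strictly inside the inner boundary, factor each linear term at infinity to obtain $c\,q^{-(2n+\alpha+\beta+1)}(1+e)$ with $|e|_t<1$, note that only strictly negative powers of $q$ occur so the residue vanishes, and conclude by Corollary~\ref{cor:four}; your bound $|\varepsilon|<\rho/\max(1,|x_i|)$ just makes the paper's ``sufficiently small'' explicit.
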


\begin{proof}
The Laurent expansion at infinity has the form
$$
 \frac{A(x)}{Q_n(x)^2}x
 =c\,x^{-(2n+\alpha+\beta+1)}(1+O(x^{-1})),\qquad c\ne0.
$$
Choose $|\varepsilon|$ small enough that every finite zero and pole, after scaling, lies strictly inside the inner boundary. Factoring the finitely many linear factors makes the term in parentheses a unit of norm distance less than $1$ from $1$ on the annulus. The expansion has only strictly negative powers of $q$, so its residue is zero. Corollary~\ref{cor:four} gives the result. For a fixed finite collection of degrees the scale can be chosen in common.
\end{proof}

\section{Modular pullbacks and least Frobenius iterates}\label{sec:modular}

\subsection{Genus-zero descent and characters}

We use the modular-form conventions of \cite[\S1.1]{Zagier}. Let $X_\Gamma$ be a genus-zero modular curve. A \emph{Hauptmodul} is a coordinate $t$ identifying its meromorphic function field with $L(t)$, after choosing a field of definition $L$. If $f_0$ is a meromorphic form of weight two with trivial character, defined over $L$, then $f_0/(\theta t)$ is invariant. Hence
\begin{equation}\label{eq:modular-A}
 f_0=A(t)\theta t,\qquad A\in L(x),
 \qquad
 \frac{f_0}{Q(t)^2}\frac{dq}{q}
 =t^*\left(\frac{A(x)}{Q(x)^2}\,dx\right).
\end{equation}
Here the cusp has width one, so $q=e^{2\pi i\tau}$ and $\theta=(2\pi i)^{-1}d/d\tau$. Other widths are handled by constant rescaling of the derivation. The modular terminology and genus-zero coordinate viewpoint follow \cite[\S\S1.1,5.4]{Zagier}.

\begin{corollary}[Trivial-character pullbacks]\label[corollary]{cor:modular}
Suppose the data in \eqref{eq:modular-A} are defined over a number field, $Q$ is Stieltjes for $A$, and a $p$-adic embedding gives
\[
 g=\frac{f_0}{Q(t)^2}\in\RK^\times.
\]
For odd $p$ the corresponding module admits $\varphi_d$-Frobenius for every $d=p^s$. A developing map $h\in\RK$ with $\theta h=g$ exists exactly when $\res(g\,dq/q)=0$. Every such map satisfies the obstruction in \cref{thm:rigidity}. Moreover, for odd $p$, analytic families of these developing differentials satisfying \eqref{eq:family-dom} carry the Frobenius matrix
\eqref{eq:family-frob}.
\end{corollary}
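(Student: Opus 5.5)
The plan is to reduce every clause of \cref{cor:modular} to results already proved for an arbitrary unit $g\in\RK^\times$, after first checking that the modular data really produce such a unit. The identity \eqref{eq:modular-A} shows that $g\,dq/q$ is the pullback under the Hauptmodul $t$ of the rational differential $A(x)Q(x)^{-2}dx$. The Stieltjes hypothesis on $Q$ is then used only through \cref{prop:stieltjes}: the coefficient $\mathcal F_{A,Q}$ is regular at the roots of $Q$. By the chain-rule computation in the proof of \cref{thm:stieltjes-family}, applied here without parameters, we obtain
\[
F_g=(\theta t)^2\mathcal F_{A,Q}(t)+\Sch{t}{\theta}.
\]
This identifies $M(g)$ with the pullback of the Stieltjes projective connection. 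It explains the meaning of the statement, but it is not needed for the Frobenius assertions. Those depend only on $g$ being a unit, which is part of the hypothesis after the $p$-adic embedding.

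For the Frobenius assertions, take $p$ odd. \cref{lem:units} gives $g=cq^mv^2$, and \cref{cor:four} then gives Frobenius for every $d=p^s$. Concretely, with $\zeta=q^{(d-1)m/2}\varphi(v)/v$ we get
\[
\zeta^2=c^{-1}\cdot c\,q^{(d-1)m}\frac{\varphi(v)^2}{v^2}=\frac{\varphi(g)}{g},
\]
so the square-class condition \eqref{eq:square} of \cref{thm:frobenius} holds with $c_0=1$. The family clause is a direct citation: if an analytic family of such $g$ satisfies \eqref{eq:family-dom}, then \cref{thm:family} provides \eqref{eq:family-frob}.

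For the developing map, \cref{lem:integration} writes $g=r+\theta H$ with $r=\res(g\,dq/q)$. If $r=0$, then $h=H$ satisfies $\theta h=g$. Conversely, if $\theta h=g$ for some $h\in\RK$, comparing constant coefficients gives $r=0$, because $\theta h$ has no constant term. Any such $h$ has $\theta h=g$, a unit, so $h$ satisfies the hypotheses of \cref{thm:rigidity}. Hence no $C\in\PGL_2(K)$ satisfies $\varphi_d(h)=C\circ h$.

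The main point needing care is the unit hypothesis. One should not try to derive it from the modular data: the statement assumes that $g$ lies in $\RK^\times$ after embedding, so convergence of the $q$-expansion of $f_0/Q(t)^2$ on some annulus $\rho<|q|<1$ is given, not proved. The second point is keeping the two residues separate. The integer winding number $m(g)$ governs the square root used for Frobenius, while the residue $\res(g\,dq/q)\in K$ governs whether a developing map exists. The Stieltjes residue cancellation at the roots of $Q$ on the curve does not by itself force $\res(g\,dq/q)=0$ on the annulus, which is why existence of $h$ stays conditional.
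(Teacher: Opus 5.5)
Your proposal is correct and matches the paper's proof. The paper likewise identifies $g\,dq/q$ as the Stieltjes pullback via \eqref{eq:modular-A} and then simply invokes \cref{cor:four}, \cref{lem:integration}, \cref{thm:rigidity}, and \cref{thm:family}; your explicit choice $\zeta=q^{(d-1)m/2}\varphi(v)/v$ just unpacks \cref{cor:four}. One minor caveat: your displayed chain-rule formula for $F_g$ would also require $\theta t$ and the relevant linear factors in $t$ to be units, as assumed in \cref{thm:stieltjes-family}. You rightly treat that formula as explanatory rather than as a step, so nothing in your argument depends on it.
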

\begin{proof}
The identity \eqref{eq:modular-A} identifies the differential with the Stieltjes pullback. Apply \cref{cor:four}, \cref{lem:integration}, \cref{thm:rigidity}, and \cref{thm:family}.
\end{proof}

A form with a nontrivial character can instead have a fractional expansion $q^\nu f(q)$. Its quotient by $\theta t$ need not be rational in $t$. This is precisely the situation in the reducible modular constructions of \cite{SSintegrals,BSS}. Theorem~\ref{thm:kummer} treats the corresponding connection over the original annulus. The next application verifies its analytic hypotheses explicitly, so no convergence assumption is left implicit.

\subsection{The Hurwitz--Klein family}

Use the normalized invariant $J=j/1728$, so that its elliptic values are $0$ and $1$. The classical Fourier expansions and product formulas \cite{Zagier} are
\begin{equation}\label{eq:modular-series}
 \begin{split}
 \mathcal E(q)&=\prod_{k\ge1}(1-q^k),\qquad
 \eta(q)=q^{1/24}\mathcal E(q),\qquad \Delta=q\mathcal E^{24},\\
 E_2&=1-24\sum_{k\ge1}\sigma_1(k)q^k,\qquad
 E_6=1-504\sum_{k\ge1}\sigma_5(k)q^k,\\
 J&=\frac{E_4^3}{1728\Delta}.
 \end{split}
\end{equation}
All integral-power series in this display converge on $0<|q|<1$ over every $\mathbb Q_p$. The symbol $\eta$ may be interpreted on $q=z^{24}$; its logarithmic derivative belongs to the original ring. We use the classical identities \cite[equation~(23) and Proposition~15]{Zagier}
\begin{equation}\label{eq:ramanujan}
 \theta E_2=\frac{E_2^2-E_4}{12},\quad
 \theta E_4=\frac{E_2E_4-E_6}{3},\quad
 \theta E_6=\frac{E_2E_6-E_4^2}{2},\quad
 E_4^3-E_6^2=1728\Delta.
\end{equation}
Logarithmic differentiation of the product gives $\theta\eta/\eta=E_2/24$.

For $n\ge0$, let $Q_n\in\mathbb Q[x]$ be the monic polynomial of degree $n$ satisfying
\begin{equation}\label{eq:modular-jacobi}
 x(1-x)Q_n''+\left(\frac23-\frac76x\right)Q_n'
             +n\left(n+\frac16\right)Q_n=0.
\end{equation}
It is a scalar normalization of $P_n^{(-1/2,-1/3)}(2x-1)$, so its roots are simple and lie in $(0,1)$ when $n>0$ \cite[\S\S18.8,18.16(ii)]{DLMF}. Existence and uniqueness can equally be seen by solving the triangular coefficient equations, whose diagonal differences are $n(n+1/6)-k(k+1/6)$ for $0\le k<n$. For example,
\[
 Q_0=1,\qquad Q_1=x-\frac47,\qquad
 Q_2=x^2-\frac{20}{19}x+\frac{40}{247}.
\]
The differential $\eta^4Q_n(J)^{-2}\,dq/q$ is the classical weight-two construction of \cite[Theorems~7.2--7.3 and Corollary~7.4]{SSintegrals}; the Jacobi identification is \cite[Theorem~3.1]{BSS}. We derive the coefficient below in our normalization.

\begin{lemma}[Classical modular Stieltjes identity; {\cite[Theorem~7.3]{SSintegrals}}]\label[lemma]{lem:modular-identity}
Put $g_n=\eta^4/Q_n(J)^2$, interpreted as a Kummer developing function. Its coefficient is
\begin{equation}\label{eq:modular-F}
 F_{g_n}=-\frac{(12n+1)^2}{72}E_4.
\end{equation}
The weighted Stieltjes data on the $J$-line have logarithmic derivative
\begin{equation}\label{eq:modular-a}
 a(x)=-\frac{2}{3x}-\frac{1}{2(x-1)},\qquad
 V_n(x)=\frac{n(n+1/6)}{x(1-x)}.
\end{equation}
\end{lemma}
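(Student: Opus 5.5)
The plan is to compute $u=\theta g_n/g_n$ directly and then apply the chain-rule mechanism from the proof of \cref{thm:stieltjes-family}, with the Hauptmodul $J$ in place of $t$. First, $g_n=q^{1/6}\mathcal E^4Q_n(J)^{-2}$ is a Kummer developing function with $\nu=1/6$ and $f=\mathcal E^4Q_n(J)^{-2}$, so $u_{\nu,f}$ and $F_{g_n}$ lie in the original ring. Since $\theta\eta/\eta=E_2/24$, we have
\[
 u=\frac{E_2}{6}-2\frac{Q_n'(J)}{Q_n(J)}\,\theta J .
\]
Next I will record the logarithmic derivatives of $J$ and $J-1$ using \eqref{eq:ramanujan} and $\theta\Delta/\Delta=E_2$. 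Since $J=E_4^3/(1728\Delta)$ and $J-1=E_6^2/(1728\Delta)$,
\[
 \frac{\theta J}{J}=-\frac{E_6}{E_4},\qquad
 \frac{\theta(J-1)}{J-1}=-\frac{E_4^2}{E_6}.
\]

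The key identity is
\[
 a(J)\,\theta J+\frac{\theta(\theta J)}{\theta J}=\frac{E_2}{6},
\]
with $a$ as in \eqref{eq:modular-a}. The first term equals $-\tfrac23\theta J/J-\tfrac12\theta(J-1)/(J-1)=\tfrac23E_6/E_4+\tfrac12E_4^2/E_6$. The second term is $\theta J/J+\theta E_6/E_6-\theta E_4/E_4=E_2/6-\tfrac23E_6/E_4-\tfrac12E_4^2/E_6$. Adding them gives $E_2/6$.

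Hence $u=v\,b(J)+c$, where $v=\theta J$, $c=\theta v/v$ and $b=a-2Q_n'/Q_n$. This is exactly the setting of the chain-rule computation in the proof of \cref{thm:stieltjes-family}; the mixed terms cancel and
\[
 F_{g_n}=(\theta J)^2\mathcal F_{A,Q_n}(J)+\Sch{J}{\theta}.
\]
This identity holds in the ring of meromorphic $q$-series, where $E_4,E_6$ are invertible. By \eqref{eq:vv}, $\mathcal F_{A,Q_n}=a'-\tfrac12a^2+2V_n$. Before using this I will check that \eqref{eq:modular-jacobi} is precisely $Q_n''-aQ_n'+V_nQ_n=0$ after multiplying by $x(1-x)$, because $x(1-x)a(x)=-\tfrac23+\tfrac76x$. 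This identifies $V_n$ in \eqref{eq:modular-a}, and \cref{prop:stieltjes} then gives its regularity at the roots.

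I will then split $F_{g_n}$ into an $n$-independent part and the $V_n$ part. The $n$-independent part is $(\theta J)^2(a'-a^2/2)(J)+\Sch{J}{\theta}$, which is what the same formula yields for $n=0$. For $n=0$ we have $u=E_2/6$, so $F_{g_0}=\theta E_2/6-E_2^2/72=-E_4/72$ by \eqref{eq:ramanujan}. This avoids expanding the Schwarzian of $J$ by hand. For the $V_n$ part, $(\theta J)^2=J^2E_6^2/E_4^2$ and $J/(J-1)=E_4^3/E_6^2$, so
\[
 2V_n(J)(\theta J)^2=-2n\left(n+\tfrac16\right)\frac{J}{J-1}\frac{E_6^2}{E_4^2}
 =-2n\left(n+\tfrac16\right)E_4 .
\]
Summing the two parts gives $-\bigl(1+144n^2+24n\bigr)E_4/72=-(12n+1)^2E_4/72$, which is \eqref{eq:modular-F}.

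The main obstacle is justifying the chain rule with $t=J$. $J$ is not a Robba unit (it has a pole at $q=0$), and $E_4,E_6$ may vanish at points of the annulus. I will treat all identities as identities of meromorphic Laurent series in $K((q))$, or as formal identities in the differential field generated by $E_2,E_4,E_6$. Only the final expressions $u$ and $F_{g_n}$ need to lie in the Robba ring. For $u$ this holds because $Q_n(J)$ is a unit after the $p$-adic embedding considered later, so $\theta(Q_n(J))/Q_n(J)$ is defined. For $F_{g_n}$ it holds because the result is the polynomial $-(12n+1)^2E_4/72$. The algebraic verification itself follows \cref{prop:formalism} and the proof of \cref{thm:stieltjes-family} and is purely formal.
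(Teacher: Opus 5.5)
Your proposal is correct and follows the paper's proof. It uses the same key identity $a(J)\,\theta J+\theta^2J/\theta J=E_2/6$, which is the paper's logarithmic derivative of $\eta^4/\theta J$ in $J$, the same base case $F_{g_0}=-E_4/72$, and the same difference $2(\theta J)^2V_n(J)=-2n(n+1/6)E_4$; the only change is that you write out the mixed-term cancellation explicitly where the paper cites \cref{prop:stieltjes}, \cref{rem:weighted}, and \eqref{eq:chain}. The obstacle you raise does not actually occur $p$-adically: $J=E_4^3/(1728q\,\mathcal E^{24})$ and $E_4,E_6,\mathcal E\in 1+q\mathbb Z_p[[q]]$ are units of $\mathcal R_{\mathbb Q_p}$, so $\theta J\in\mathcal R_{\mathbb Q_p}^\times$, although your choice to work formally is equally valid.
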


\begin{proof}
We verify the classical identity in the normalization of \eqref{eq:modular-series}, using \cite[Proposition~15]{Zagier}. The identities \eqref{eq:ramanujan} give
\[
 \theta J=-J\frac{E_6}{E_4},\qquad
 (\theta J)^2=J(J-1)E_4.
\]
If $A=\eta^4/(\theta J)$, logarithmic differentiation with respect to $J$ gives
\[
 \frac{1}{\theta J}\left(\frac{E_2}{6}
                   -\frac{\theta^2J}{\theta J}\right)
 =-\frac{2}{3J}-\frac{1}{2(J-1)}.
\]
Thus $A$ is a local branch of a constant multiple of $x^{-2/3}(x-1)^{-1/2}$, and \eqref{eq:modular-jacobi} is exactly the weighted Stieltjes equation with \eqref{eq:modular-a}. For $n=0$, the logarithmic derivative of $g_0=\eta^4$ is $E_2/6$, so the first Ramanujan identity gives $F_{g_0}=-E_4/72$. Proposition~\ref{prop:stieltjes}, Remark~\ref{rem:weighted}, and the chain rule \eqref{eq:chain} give
\[
 F_{g_n}-F_{g_0}
 =2(\theta J)^2V_n(J)=-2n(n+1/6)E_4,
\]
which is \eqref{eq:modular-F}.
\end{proof}

The identity is over $\mathbb Q$ at the level of formal expansions. Its local modular interpretation is classical; the next theorem describes its annular $p$-adic connection and Frobenius structures.

\begin{theorem}[Annular identification and least Frobenius iterate]\label[theorem]{thm:modular-family}
Let $p$ be any prime and $n\ge0$. Over $\mathcal R_{\mathbb Q_p}$, the differential module
\[
 \mathcal M_n:\quad
 \theta^2y-\frac{(12n+1)^2}{144}E_4(q)y=0
\]
is isomorphic to $\theta Z=\Lambda_0 Z$, where
\[
 \Lambda_0=\diag(-1/12,1/12).
\]
In particular, all $\mathcal M_n$ are mutually isomorphic as annular differential modules. For $d=p^s$, they admit Frobenius exactly when
\begin{equation}\label{eq:mod12}
 d\equiv1\quad\text{or}\quad d\equiv-1\pmod{12}.
\end{equation}
At primes $p\ge5$, the least exponent $s$ is one for $p\equiv1,11\pmod{12}$ and two for $p\equiv5,7\pmod{12}$. At $p=2,3$ no positive exponent $s$ exists.
\end{theorem}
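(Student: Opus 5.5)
The plan is to realize $\mathcal M_n$ as a Kummer Schwarzian module $M_{\nu,w^2}$ with an explicit square unit $w$. \cref{cor:kummer-square} then applies at every prime, including $p=2,3$, and the Frobenius statements reduce to \eqref{eq:kummer-criterion} and \cref{cor:least-power}. The first step compares coefficients. Since $M(g)$ is $\theta^2y+\frac12F_gy=0$, \cref{lem:modular-identity} shows that $\mathcal M_n$ is exactly the Schwarzian module of the Kummer developing function $g_n=\eta^4/Q_n(J)^2$:
\[
\tfrac12F_{g_n}=-\tfrac{(12n+1)^2}{144}E_4 .
\]
The identity is one between $q$-expansions over $\mathbb Q$, so it survives every embedding into $\mathcal R_{\mathbb Q_p}$, provided $u=\theta g_n/g_n$ lies in that ring.

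The second step factors $g_n=q^{\nu}w_n^2$. Since $\eta^4=q^{1/6}\mathcal E^4$ and $J=E_4^3/(1728\,q\,\mathcal E^{24})$, I would write
\[
Q_n(J)=J^n\,\widetilde Q_n\!\left(\tfrac1J\right),
\qquad
\widetilde Q_n(y)=\prod_i(1-x_iy),
\qquad
\tfrac1J=\frac{1728\,q\,\mathcal E^{24}}{E_4^3}.
\]
This gives $\nu=2n+\tfrac16$ and
\[
w_n=\frac{1728^n\,q^{-n}J^{-n}\,\mathcal E^2}{\widetilde Q_n(1/J)}
=\frac{1728^{2n}\,\mathcal E^{2+24n}}{E_4^{3n}\,\widetilde Q_n(1/J)} .
\]
Here $\mathcal E^{\pm1}\in\mathbb Z[[q]]$ converge on $|q|<1$, so they are units of the Robba ring. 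Also $E_4\equiv1\pmod{240q}$, so $E_4^{-1}$ is a unit on a suitable annular germ. The remaining factor $\widetilde Q_n(1/J)\in\mathbb Q[[q]]$ has constant term $1$. I would show that it is a unit of $\mathcal R_{\mathbb Q_p}$ by showing that its dominant monomial is the constant $1$ on some annulus $\rho<|q|<1$, that is, $|\widetilde Q_n(1/J)-1|_t<1$.

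This unit check is the step I expect to be the main obstacle. The Jacobi coefficients of $Q_n$ carry denominators, such as $7$, $19$, $13$ already for $n\le2$, and the factor $1728$ is small at $2$ and $3$ but a unit elsewhere. Two routes seem possible.
\begin{itemize}
\item My first attempt would be to use the Jacobi recursion to bound the $p$-adic size of the coefficients of $1728^{k}\cdot(\text{coefficient of }x^{n-k}\text{ in }Q_n)$. Combined with the integrality of $q\mathcal E^{24}/E_4^3$, this would give the dominance on the annulus.
\item If that fails, I would avoid $Q_n$ entirely. I would build the gauge between $\mathcal M_n$ and $\mathcal M_0$ as $P_{\nu,w_n^2}P_{1/6,\mathcal E^4}^{-1}$ times the monomial shift $\diag(q^{-n},q^{n})$. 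I would then verify directly that its entries have coefficients in $\mathbb Q$ that converge on a Robba annulus, using that they satisfy a Laurent recursion with polynomially bounded denominators, as in \cref{lem:twisted}.
\end{itemize}
In either route, $w_n$ is an explicit quotient of series and not a binomial square root. This is why the argument should work at $p=2$ as well.

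Granting this step, \cref{cor:kummer-square} gives a determinant-one gauge from $\mathcal M_n$ to $\diag(-\nu/2,\nu/2)=\diag(-n-\tfrac1{12},\,n+\tfrac1{12})$. The constant gauge $\diag(q^{n},q^{-n})$ then shifts this to $\Lambda_0$, by the exponent-shearing remark after \cref{thm:kummer}. Hence all $\mathcal M_n$ are isomorphic. For $\nu=\tfrac16$ (mod $2\mathbb Z$, which does not affect the criterion), \eqref{eq:kummer-criterion} reads $(d\mp1)/12\in\mathbb Z$, which is \eqref{eq:mod12}. \cref{cor:least-power} with denominator $b=12$ then finishes:
\begin{itemize}
\item no $s$ exists when $p\mid12$, that is, for $p=2,3$;
\item for $p\ge5$, $p^2\equiv1\pmod{12}$ always holds, so the least $s$ is $1$ when $p\equiv\pm1\pmod{12}$ and $2$ when $p\equiv5,7\pmod{12}$.
\end{itemize}
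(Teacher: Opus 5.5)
Your reduction is sound: matching $\tfrac12F_{g_n}$ with the coefficient of $\mathcal M_n$ via \cref{lem:modular-identity}, applying \cref{cor:kummer-square} to an explicit square, shifting exponents by a monomial gauge, and reading off \eqref{eq:mod12} and the least $s$ from \cref{cor:least-power} all work. The extra constant $1728^n$ in your $w_n$ is harmless, since $M_{\nu,f}$ depends on $f$ only up to scalars. There is, however, a genuine gap at exactly the step you flagged, and your proposed way of closing it rests on a false claim.

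You want $|\widetilde Q_n(1/J)-1|_t<1$ near $t=1$, that is, every factor $1-x_i/J$ dominated by $1$. Since $|1/J|_t=|1728|_p\,t$, this forces $|x_i|_p\le|1728|_p^{-1}$ for every root, and that fails in general. For $p=7$, $n=1$ you have $Q_1=x-\tfrac47$ and $|\tfrac47J^{-1}|_t=7t>1$ for $t>1/7$. So near the outer boundary the dominant monomial of $\widetilde Q_1(1/J)$ is a multiple of $q$. The series even vanishes at a point of $|q|=1/7$, so its formal inverse in $\mathbb Q[[q]]$ does not converge near $|q|=1$. For $n=2$, the roots of $Q_2$ are likewise non-integral at $p=13$ and $p=19$. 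Hence the coefficient bounds of your first route cannot hold.

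Your second route does not avoid $Q_n$. The entries of $P_{\nu,w_n^2}$ are built from $w_n^{\pm1}$ and $H_n$, so you still need $w_n\in\mathcal R_{\mathbb Q_p}^\times$ before \cref{lem:twisted} applies. Moreover, a gauge solved coefficientwise from $\theta G=B_{F_n}G-GB_{F_0}$ picks up a new denominator at every step through the convolution with $E_4$. That is not the single-division, polynomially bounded situation of \cref{lem:twisted}.

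The missing idea is that a Robba unit need not have constant dominant monomial; the dominant term must be chosen root by root. Over a splitting field, write $Q_n(J)=\prod_\xi(J-\xi)$ and use $|J|_t=L/t$ with $L=|1728|_p^{-1}$.
\begin{itemize}
\item If $|\xi|\le L$, then $J-\xi=J(1-\xi/J)$ with $|\xi/J|_t\le t<1$.
\item If $|\xi|>L$, then $J-\xi=-\xi(1-J/\xi)$ with $|J/\xi|_t<1$ for $t>L/|\xi|$.
\end{itemize}
A common inner radius makes $Q_n(J)$ a unit of the Robba ring, and its inverse descends to $\mathbb Q_p$. Its winding number is minus the number of roots with $|\xi|\le L$, which need not be $-n$. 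Accordingly the paper does not pull out $J^n$. It keeps $\nu=1/6$ and $w_n=\mathcal E^2/Q_n(J)$, and \cref{cor:kummer-square} then gives $\Lambda_0$ directly at every prime.
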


\begin{proof}
We first verify the annular units. On $|q|<1$, the series $\mathcal E$, $E_4$, and $E_6$ have constant term one and integral coefficients, hence are units of norm one at every radius. Write
\[
 J=(1728q)^{-1}U(q),\qquad U\in1+q\mathbb Z_p[[q]].
\]
Thus $|J|_t=|1728|_p^{-1}t^{-1}$. In a finite splitting field for $Q_n$, let $\xi$ be one of its roots and put $L=|1728|_p^{-1}$. If $|\xi|\le L$, then $J-\xi=J(1-\xi/J)$ is a unit on every annulus in the punctured disc. If $|\xi|>L$, then on $t>L/|\xi|$ the factorization $J-\xi=-\xi(1-J/\xi)$ proves invertibility. Choosing one inner radius for the finitely many roots proves that $Q_n(J)$ and its inverse belong to a Robba ring. The inverse descends to $\mathbb Q_p$ by uniqueness, or directly by its rational expression.

Consequently,
\[
 w_n=\frac{\mathcal E^2}{Q_n(J)}\in\mathcal R_{\mathbb Q_p}^{\times},
 \qquad g_n=q^{1/6}w_n^2.
\]
Set $f_n=w_n^2$, $H_n=(\theta+1/6)^{-1}f_n$, and
\begin{equation}\label{eq:modular-P}
 a_n=-\frac12\left(\frac16+\frac{\theta f_n}{f_n}\right),\qquad
 \mathcal P_n=w_n^{-1}
 \begin{pmatrix}1&H_n\\a_n&f_n+a_nH_n\end{pmatrix}.
\end{equation}
\cref{lem:twisted} gives $H_n$ on the same open annulus, and $\det\mathcal P_n=1$. \cref{cor:kummer-square} gives the normal form $\Lambda_0$ at every prime.

For this diagonal system, a nonzero Frobenius entry has exponent $(d-1)/12$, $-(d-1)/12$, $-(d+1)/12$, or $(d+1)/12$. An invertible diagonal or antidiagonal pair exists precisely under \eqref{eq:mod12}. Explicit choices are
\begin{equation}\label{eq:modular-C}
 C_d=\begin{cases}
 \diag(q^{(d-1)/12},q^{-(d-1)/12}),&d\equiv1\pmod{12},\\[1mm]
 \begin{pmatrix}0&q^{-(d+1)/12}\\q^{(d+1)/12}&0\end{pmatrix},&d\equiv-1\pmod{12}.
 \end{cases}
\end{equation}
The original companion-basis Frobenius matrix is
\[
 \Phi_{n,d}=\mathcal P_n C_d\varphi_d(\mathcal P_n)^{-1}.
\]
Finally, every prime $p\ge5$ is $1,5,7$, or $11$ modulo $12$, and each of these classes has square one. Powers of $2$ or $3$ are never $\pm1$ modulo $12$. This proves the assertions about the least exponent $s$.
\end{proof}

The coefficients in \eqref{eq:modular-F} vary with $n$, while \eqref{eq:modular-P} identifies their connections on an outer annular germ. At $p\equiv11\pmod{12}$, Frobenius interchanges the two constituents; at $p\equiv5,7\pmod{12}$, the pair closes only after squaring the Frobenius substitution.

\subsection{The four modular congruence classes}
The other three reducible modular families have the same annular treatment. Their classical developing differentials and Jacobi description are given in \cite[\S\S2--3]{BSS}. The following consequence identifies exactly how much of their parameter survives in the annular differential module.

\begin{corollary}[Two annular types for the reducible modular family]\label[corollary]{cor:all-modular}
For every positive integer $\ell$ coprime to $6$, let
\[
 \mathcal N_\ell:\quad \theta^2y-\frac{\ell^2}{144}E_4y=0.
\]
Write $\ell=12n+e$, where $n\geq 0$ and $e\in\{1,5,7,11\}$. Over $\mathcal R_{\mathbb Q_p}$, at every prime, this module is isomorphic to
\[
 \theta Z=\diag(-e/12,e/12)Z.
\]
Two such modules $\mathcal N_{\ell_1}$ and $\mathcal N_{\ell_2}$ are isomorphic exactly when $\ell_1\equiv\ell_2$ or $-\ell_2\pmod{12}$. Thus the full family has exactly two annular differential-module types. Every member has the Frobenius existence criterion and least exponents $s$ of Theorem~\ref{thm:modular-family}.
\end{corollary}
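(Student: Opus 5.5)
The plan is to imitate the proof of \cref{thm:modular-family}: realize $\mathcal N_\ell$ as the Schwarzian module of a Kummer developing function $q^{\nu}w^2$ with $w\in\mathcal R_{\mathbb Q_p}^\times$, and then apply \cref{cor:kummer-square}, which holds at every prime. To find the exponent, look at the constant term of the coefficient. The equation is $\theta^2y+\frac12Fy=0$ with $F=-\ell^2E_4/72$. For $u=\nu+\theta f/f$ with $f\in1+q\mathbb Z[[q]]$, the constant term of $F=\theta u-u^2/2$ is $-\nu^2/2$. Hence $\nu=\pm\ell/6$, and we take $\nu=e/6$. Note $\ell/6\notin\mathbb Z$ because $\gcd(\ell,6)=1$.

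\textbf{Step 1: the developing functions.} For each class $e\in\{5,7,11\}$ I would use the developing differentials of \cite[Theorems~3.1 and~6.1]{BSS}. These have the form $q^{e/6}$ times a product of powers of $\mathcal E$, $E_4$, $E_6$ (equivalently of $\eta^4$, $E_4^{1/3}$ or $E_6^{1/2}$ factors), divided by $Q(J)^2$ with $Q$ a Jacobi-type Stieltjes polynomial. I would verify the identity $F=-\ell^2E_4/72$ as in \cref{lem:modular-identity}. The $n=0$ case is a direct Ramanujan computation using \eqref{eq:ramanujan}, and the $n$-dependence enters through $2(\theta J)^2V_n(J)$ via \cref{prop:stieltjes} and Remark~\ref{rem:weighted}.

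The point needing care is that the unit part must be an explicit square $w^2$ in $\mathcal R_{\mathbb Q_p}^\times$ at every prime, including $p=2,3$. Fractional powers of $E_4,E_6$ are avoided by absorbing them: $E_4^{1/3}E_6^{1/2}$-type factors combine with $\theta J$ and $J$ into expressions like $E_4^aE_6^b\Delta^c$ with integral exponents after shifting $q^\nu$. Convergence of $Q(J)^{-1}$ on a Robba annulus is proved exactly as in \cref{thm:modular-family}, by splitting roots according to $|\xi|$ versus $|1728|_p^{-1}$.

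\textbf{Fallback route.} If an explicit square factorization is awkward for some class, I would avoid the modular construction entirely. I would construct the gauge directly by solving the Frobenius-free Riccati equation $\theta a+a^2=\ell^2E_4/144$ with $a\in -\nu/2+q\mathbb Q_p[[q]]$. This amounts to triangular recursion, and the denominators $k(k-\nu)$ never vanish. However, that recursion's $p$-adic convergence on an annulus is the main obstacle, which is why the modular product formulas are preferable.

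\textbf{Step 2: the normal form.} Given the normal form $\diag(-\ell/12,\ell/12)$ from \cref{cor:kummer-square}, I would pass to $\diag(-e/12,e/12)$ using the gauge $\diag(q^{n},q^{-n})$, since $\ell/12-e/12=n\in\mathbb Z$. Here I would also handle the sign: $-\ell/12$ and $\ell/12$ are swapped by an antidiagonal permutation.

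\textbf{Step 3: isomorphism criterion and Frobenius.} For the criterion, a horizontal map between $\diag(-e_1/12,e_1/12)$ and $\diag(-e_2/12,e_2/12)$ has entries $q^k$-monomials, by the coefficient comparison from \cref{lem:integration}. An invertible one exists iff the unordered pairs $\{\pm e_1/12\}$ and $\{\pm e_2/12\}$ agree modulo $\mathbb Z$, that is, iff $e_1\equiv\pm e_2\pmod{12}$. This gives the two types $\{1,11\}$ and $\{5,7\}$. Finally, the Frobenius criterion \eqref{eq:kummer-criterion} with $\nu=e/6$ and $e$ coprime to $6$ reduces to $d\equiv\pm1\pmod{12}$. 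The least exponents then follow exactly as in \cref{thm:modular-family}, or via \cref{cor:least-power} with $b=12$.
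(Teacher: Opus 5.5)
Your plan is essentially the paper's proof: you realize $\mathcal N_\ell$ by the Kummer developing function of \cite{BSS}, check the coefficient via the $n=0$ Ramanujan computation plus the Stieltjes term $2(\theta J)^2V(J)$, apply \cref{cor:kummer-square} at every prime, and classify by exponent pairs modulo $\mathbb Z$. The paper makes your Step~1 explicit as $g_{n,e}=\eta^{4e}/(E_4^{\epsilon}E_6^{\delta}Q_{n,e}(J)^2)$ with $e=1+2\epsilon+3\delta$ and $\epsilon,\delta\in\{0,2\}$, where the evenness of all exponents (not merely their integrality, as you phrase it) is what gives the explicit square $w_{n,e}^2$; it also takes $\nu=e/6$ from the outset, so your $\diag(q^n,q^{-n})$ shift is unnecessary, though harmless.
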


\begin{proof}
The Jacobi pole polynomial and developing differential used below arise
from the reducible modular construction of
\cite[Theorem~3.1, Corollary~4.4, and Theorem~6.1]{BSS}. We verify the Schwarzian coefficient directly in our normalization
before applying the annular classification. Choose $\epsilon,\delta\in\{0,2\}$ such that
$e=1+2\epsilon+3\delta$, and set
\[
\alpha=\frac{\epsilon-1}{3},
\qquad
\beta=\frac{\delta-1}{2}.
\]
They are rational and greater than $-1$. Let $Q_{n,e}$ be the
monic normalization of the shifted Jacobi polynomial
\[
P_n^{(\beta,\alpha)}(2x-1).
\]
The finite representation of the Jacobi polynomials in
\cite[Equation~(18.5.7)]{DLMF} shows that
$Q_{n,e}\in\mathbb Q[x]$ and has degree $n$. The Jacobi differential
equation \cite[Table~18.8.1]{DLMF} gives
\begin{equation}\label{eq:four-jacobi}
 x(1-x)Q_{n,e}''
 +\left(\frac{2+\epsilon}{3}
        -\left(1+\frac e6\right)x\right)Q_{n,e}'
 +n\left(n+\frac e6\right)Q_{n,e}=0.
\end{equation}
This monic solution is unique: the difference of two such solutions,
if nonzero of degree $k<n$, would give
$k\left(k+\frac e6\right)
=n\left(n+\frac e6\right)$
upon comparison of leading coefficients, which is impossible.
Since the polynomial and the differential equation are defined over
$\mathbb Q$, they remain valid after scalar extension to $\mathbb Q_p$. Put
\[
 g_{n,e}=\frac{\eta^{4+8\epsilon+12\delta}}
 {E_4^\epsilon E_6^\delta Q_{n,e}(J)^2},
\]
interpreted as a Kummer developing function. We verify its coefficient before applying the annular calculation. For $n=0$, its logarithmic derivative is
\[
 u_{0,e}=\frac{E_2}{6}+\frac\epsilon3 R+\frac\delta2 T,
 \qquad R=\frac{E_6}{E_4},\quad T=\frac{E_4^2}{E_6}.
\]
Ramanujan's identities give
\[
 \theta R-\frac{E_2R}{6}=-\frac{E_4}{2}+\frac{R^2}{3},\qquad
 \theta T-\frac{E_2T}{6}=-\frac{2E_4}{3}+\frac{T^2}{2},\qquad RT=E_4.
\]
Since $\epsilon^2=2\epsilon$ and $\delta^2=2\delta$, expansion of $\theta u_{0,e}-u_{0,e}^2/2$ cancels the $R^2$ and $T^2$ terms and gives $-e^2E_4/72$. Writing locally $g_{0,e}=A_e(J)\theta J$
and setting
\[
a_e(x)=\frac{A_e'(x)}{A_e(x)},
\]
we obtain the weighted logarithmic derivative
\[
a_e(x)
=-\frac{2+\epsilon}{3x}
-\frac{1+\delta}{2(x-1)}.
\]
Indeed, relative to the case $e=1$, the identities
$\Delta/E_4^3=(1728J)^{-1}$ and
$\Delta/E_6^2=(1728(J-1))^{-1}$ add the factors
$x^{-\epsilon/3}(x-1)^{-\delta/2}$. Taking logarithmic derivatives and using
$a_1(x)=-\frac{2}{3x}-\frac{1}{2(x-1)}$
from \eqref{eq:modular-a}, we obtain
\[
\begin{aligned}
a_e(x)
=a_1(x)-\frac{\epsilon}{3x}
             -\frac{\delta}{2(x-1)}
=-\frac{2+\epsilon}{3x}
  -\frac{1+\delta}{2(x-1)}.
\end{aligned}
\]
Equation~\eqref{eq:four-jacobi} gives
$V_{n,e}=n(n+e/6)/[x(1-x)]$.
Proposition~\ref{prop:stieltjes}, Remark~\ref{rem:weighted}, and the
chain rule~\eqref{eq:chain} therefore give
\[
\begin{aligned}
F_{g_{n,e}}-F_{g_{0,e}}
 =2(\theta J)^2V_{n,e}(J)
 =-2n\left(n+\frac e6\right)E_4.
\end{aligned}
\]
Consequently,
\[
F_{g_{n,e}}
=-\frac{e^2}{72}E_4
 -2n\left(n+\frac e6\right)E_4
=-\frac{(12n+e)^2}{72}E_4.
\]
Hence the Schwarzian module determined by $g_{n,e}$ is
$\mathcal N_\ell$, where $\ell=12n+e$.
The unit argument in Theorem~\ref{thm:modular-family} applies to every polynomial $Q_{n,e}$. Thus
\[
 g_{n,e}=q^{e/6}w_{n,e}^2,\qquad
 w_{n,e}=\frac{\mathcal E^{2+4\epsilon+6\delta}}
 {E_4^{\epsilon/2}E_6^{\delta/2}Q_{n,e}(J)}
 \in\mathcal R_{\mathbb Q_p}^{\times}.
\]
Since $e/6\notin\mathbb Z$ and $w_{n,e}^2$ is an explicit square,
\cref{cor:kummer-square} applies at every prime. Taking
$\nu=e/6$ and $f=w_{n,e}^2$, we obtain the gauge in
\eqref{eq:modular-P} with $w_n$ replaced by $w_{n,e}$ and $1/6$
replaced by $e/6$. It transforms the module into
\[
\theta Z=\diag(-e/12,e/12)Z.
\]
The resulting determinant-one gauge gives the claimed diagonal system. For $\ell_i=12n_i+e_i$, set
\[
(\lambda_1,\lambda_2)=(-e_1/12,e_1/12),
\qquad
(\mu_1,\mu_2)=(-e_2/12,e_2/12).
\]
A gauge between the corresponding diagonal systems has entries
satisfying $\theta C_{ij}=(\lambda_i-\mu_j)C_{ij}$. By \cref{lem:integration}, either $C_{ij}=0$, or
$\lambda_i-\mu_j\in\mathbb Z$ and
\[
C_{ij}=c_{ij}q^{\lambda_i-\mu_j},
\qquad c_{ij}\in\mathbb Q_p.
\]
An invertible gauge therefore exists precisely when the two exponent
pairs agree, up to permutation, modulo $\mathbb Z$. Hence
$\mathcal N_{\ell_1}\simeq\mathcal N_{\ell_2}$ if and only if
\[
\ell_1\equiv\ell_2
\quad\text{or}\quad
\ell_1\equiv-\ell_2\pmod{12}.
\]
The four admissible congruence classes consequently form the two classes
$\{\pm1\}$ and $\{\pm5\}$ modulo $12$.

Finally, Frobenius multiplication sends the unordered exponent pair
$\{\pm e/12\}$ to $\{\pm de/12\}$ in $\mathbb Q/\mathbb Z$. It
preserves this pair precisely when
$de\equiv \pm e\pmod{12}$.
Since $\gcd(e,12)=1$, this condition is equivalent to
$d\equiv\pm1\pmod{12}$,
which is exactly \eqref{eq:mod12}. When this condition holds, an
explicit Frobenius matrix in the diagonal basis is obtained from
$C_d$ in \eqref{eq:modular-C} by replacing each power $q^\alpha$
with $q^{e\alpha}$. Write $C_{d,e}$ for this matrix and $\mathcal P_{n,e}$ for the
preceding determinant-one gauge. The Frobenius matrix in the
original companion basis is
\[
\Phi_{n,e,d}
=\mathcal P_{n,e}C_{d,e}
 \varphi_d(\mathcal P_{n,e})^{-1}.
\]
\end{proof}
For $p\equiv5,7\pmod{12}$, pullback by $\varphi_p$ exchanges the two annular types in Corollary~\ref{cor:all-modular}, because multiplication by $p$ exchanges the exponent pairs $\{\pm1/12\}$ and $\{\pm5/12\}$ modulo $\mathbb Z$. This explains why two iterations are necessary and sufficient.

The arithmetic Frobenius action on the roots of the Jacobi pole polynomial is studied in \cite[Proposition~4.9]{BSS}. The structures classified here act on the annular differential module under $q\mapsto q^d$ and are constructed over $\mathbb Q_p$ without adjoining those roots. A geometric cohomological interpretation would require a comparison with its specified Frobenius map.


\begin{thebibliography}{99}
\bibitem{BCG}
M. Bertola, E. Chavez-Heredia, and T. Grava,
\href{https://doi.org/10.1093/imrn/rnae037}
{The Stieltjes--Fekete problem and degenerate orthogonal polynomials},
\emph{International Mathematics Research Notices} \textbf{2024}, no.~11,
9114--9141.

\bibitem{BSS}
K.~Besrour, H.~Saber, and A.~Sebbar,
\emph{Reducible modular differential equations, Jacobi pole divisors, and supersingular lifts},
preprint, \href{https://arxiv.org/abs/2508.10788v2}{arXiv:2508.10788v2}, 2026.

\bibitem{BGR}
S.~Bosch, U.~G\"untzer, and R.~Remmert,
\emph{Non-Archimedean Analysis: A Systematic Approach to Rigid Analytic Geometry},
Grundlehren der mathematischen Wissenschaften, vol.~261,
Springer, Berlin, 1984.

\bibitem{KedOverview}
K.~S. Kedlaya,
\href{https://doi.org/10.1142/S179304210500008X}{\emph{Local monodromy of $p$-adic differential equations: an overview}},
Int. J. Number Theory \textbf{1} (2005), no.~1, 109--154.

\bibitem{Kedlaya}
K.~S. Kedlaya,
\href{https://doi.org/10.1017/CBO9780511750922}{\emph{$p$-adic Differential Equations}},
Cambridge Studies in Advanced Mathematics, vol.~125,
Cambridge University Press, Cambridge, 2010.

\bibitem{KedFamilies}
K.~S. Kedlaya,
\emph{Monodromy representations of $p$-adic differential equations in families},
in \emph{Recent Advances in $p$-adic Hodge Theory}
(B.~Bhatt and M.~Olsson, eds.), Simons Symposia,
Springer, Cham, to appear;
\href{https://arxiv.org/abs/2209.00593v3}{arXiv:2209.00593v3}.

\bibitem{Korotkin}
D.~Korotkin,
\href{https://doi.org/10.1016/j.nuclphysb.2017.12.019}{\emph{Stieltjes--Bethe equations in higher genus and branched coverings with even ramifications}},
Nucl. Phys. B \textbf{927} (2018), 294--318.

\bibitem{MV}
E.~Mukhin and A.~Varchenko,
\href{https://doi.org/10.1142/S0219199704001288}{\emph{Critical points of master functions and flag varieties}},
Commun. Contemp. Math. \textbf{6} (2004), no.~1, 111--163.

\bibitem{DLMF}
F.~W.~J. Olver, A.~B. Olde Daalhuis, D.~W. Lozier, B.~I. Schneider,
R.~F. Boisvert, C.~W. Clark, B.~R. Miller, B.~V. Saunders, H.~S. Cohl,
and M.~A. McClain (eds.),
\href{https://dlmf.nist.gov/}{\emph{NIST Digital Library of Mathematical Functions}},
version 1.2.7, June 15, 2026.

\bibitem{SSintegrals}
H.~Saber and A.~Sebbar,
\href{https://doi.org/10.1007/s11139-020-00348-w}{\emph{Automorphic Schwarzian equations and integrals of weight $2$ forms}},
Ramanujan J. \textbf{57} (2022), no.~2, 551--568.

\bibitem{Sasaki}
R.~Sasaki,
\href{https://doi.org/10.1063/1.4918707}{\emph{Perturbations around the zeros of classical orthogonal polynomials}},
J. Math. Phys. \textbf{56} (2015), no.~4, 042106.

\bibitem{Scherbak}
I.~Scherbak,
\href{https://doi.org/10.1112/S0024610704005733}{\emph{Intersections of Schubert varieties and critical points of the generating function}},
J. London Math. Soc. (2) \textbf{70} (2004), no.~3, 625--642.

\bibitem{Stieltjes}
T.~J. Stieltjes,
\href{https://doi.org/10.1007/BF02400421}{\emph{Sur certains polyn\^omes qui v\'erifient une \'equation diff\'erentielle lin\'eaire du second ordre et sur la th\'eorie des fonctions de Lam\'e}},
Acta Math. \textbf{6} (1885), 321--326.

\bibitem{Zagier}
D.~Zagier,
\href{https://doi.org/10.1007/978-3-540-74119-0_1}{\emph{Elliptic modular forms and their applications}},
in \emph{The 1-2-3 of Modular Forms} (K.~Ranestad, ed.), Universitext,
Springer, Berlin, 2008, pp.~1--103.

\end{thebibliography}
\end{document}